\newtheorem{theorem}{Theorem}[section]
\newtheorem{lemma}[theorem]{Lemma}
\newtheorem{proposition}[theorem]{Proposition}
\newtheorem{fact}[theorem]{Fact}
\theoremstyle{definition}
\newtheorem{definition}[theorem]{Definition}
\theoremstyle{remark}
\newtheorem{remark}[theorem]{Remark}
\numberwithin{equation}{section}
\DeclareMathOperator{\Span}{span}
\DeclareMathOperator{\tr}{tr}
\DeclareMathOperator{\im}{Im}
\let\Re\relax
\DeclareMathOperator{\Re}{Re}
\setlist{listparindent=0pt,parsep=3pt}
\newcommand{\TitleWithUrl}[1]{\IfEmptyBibField{doi}%
  {\IfEmptyBibField{url}{\textit{#1}}%
    {\IfEmptyBibField{eprint}{\href {\BibField{url}}{\textit{#1}}}{\textit{#1}}}%
    }%
  {\href {https://doi.org/\BibField{doi}}{\textit{#1}}}}
\renewcommand{\eprint}[1]{\IfEmptyBibField{url}{\url{#1}}%
  {\href {\BibField{url}}{#1}}}
\title[Zero mean curvature surfaces in $\mathbb{I}^3$ with planar curvature lines]{Zero mean curvature surfaces in isotropic space with planar curvature lines}
\author{Joseph Cho}
\address[Joseph Cho]{Global Leadership School, Handong Global University, 558 Handong-ro Buk-gu, Pohang, Gyeongsangbuk-do 37554, Republic of Korea}
\email{jcho@handong.edu}
\author{Masaya Hara}
\address[Masaya Hara]{Department of Mathematics, Graduate School of Science, Kobe university, 1-1 Rokkodai-cho, Nada-ku, Kobe 657-8501, Japan}
\email{mhara@math.kobe-u.ac.jp}
\subjclass[2020]{Primary: 53A10, Secondary: 53A15, 53A35, 53B30}
\begin{document}

\begin{abstract}
	We give a comprehensive account of zero mean curvature surfaces with planar curvature lines in isotropic $3$-space.
	After giving a complete classification all such surfaces, we show that they belong to a $1$-parameter family of surfaces. We then investigate their relationship to Thomsen-type surfaces in isotropic $3$-space, those zero mean curvature surfaces in isotropic $3$-space that are also affine minimal.
\end{abstract}
\maketitle

\section{Introduction}
In surface theory, surfaces with specific geometric constraints—such as constant curvature, embeddedness, or compactness—are highly sought after, with particular interest in those satisfying multiple constraints.
Among these, the condition of having \emph{planar curvature lines} has drawn significant attention from both classical and modern geometers.
The study of surfaces with planar curvature lines began with the works of Monge \cite{monge_application_1850}, Joachimsthal \cite{joachimsthal_demonstrationes_1846}, and Bonnet \cite{bonnet_memoire_1848, bonnet_memoire_1867}, who treated surfaces as graphs of functions of two variables.
However, it was Enneper \cite{enneper_analytisch-geometrische_1868, enneper_untersuchungen_1878} and his students \cite{bockwoldt_ueber_1878, lenz_ueber_1879, voretzsch_untersuchung_1883} who introduced analytical methods for studying surfaces of constant curvature with planar curvature lines, and these methods had a profound impact on modern surface theory.
Notably, following the discovery of Wente tori \cite{wente_counterexample_1986}, Abresch \cite{abresch_constant_1987} and Walter \cite{walter_explicit_1987} independently observed that Wente tori possess one family of planar curvature lines, providing a simplifying ansatz that enabled explicit descriptions of these tori following the approach taken by Enneper.

Classical geometers were also interested in minimal surfaces with planar curvature lines, with their classification completed by Bonnet \cite{bonnet_observations_1855} and Enneper \cite{enneper_analytisch-geometrische_1864}.
For this reason, such minimal surfaces are often referred to as \emph{Bonnet minimal surfaces}.
(For a modern treatment, see \cite[Section~2.6]{nitsche_vorlesungen_1975}.)
Interestingly, Bonnet minimal surfaces have an alternative characterization from the perspective of affine differential geometry: a minimal surface is a Bonnet minimal surface if and only if its conjugate minimal surface is a Thomsen surface, a minimal surface that is also affine minimal \cite{thomsen_uber_1923}.
Through analytical methods, it was later demonstrated that Thomsen surfaces form a 1-parameter family of surfaces \cite{schaal_ennepersche_1973, barthel_thomsensche_1980}, which in turn implies that Bonnet minimal surfaces also constitute a 1-parameter family (see also \cite{cho_deformation_2017}).

Similar considerations were given to zero mean curvature surfaces in Minkowski space.
The classification of Bonnet-type maximal surfaces is provided in \cite{leite_surfaces_2015}, while their connection to Thomsen-type maximal surfaces is explored in \cite{manhart_bonnet-thomsen_2015}, and they are also shown to form a 1-parameter family in \cite{cho_deformation_2018}.
Thomsen-type timelike minimal surfaces were first studied in \cite{magid_timelike_1991-1}, and Bonnet-type timelike minimal surfaces received a comprehensive treatment, including their classification, in \cite{akamine_analysis_2020}.

In this paper, we shift our focus to \emph{isotropic space} to explore analogous problems.
Isotropic plane geometry first emerged as one of the Cayley-Klein geometries through a particular choice of the absolute quadric in projective geometry \cite{beck_zur_1913, berwald_uber_1915}.
The study of surface theory within isotropic geometry followed soon after, with foundational works by Strubecker \cite{strubecker_differentialgeometrie_1942, strubecker_differentialgeometrie_1942-1, strubecker_differentialgeometrie_1944, strubecker_differentialgeometrie_1949}.
Notably, a Weierstrass-type representation for zero mean curvature surfaces in isotropic space was introduced in \cite{strubecker_differentialgeometrie_1942-1}.
(For a more contemporary account of isotropic geometry, see \cite{sachs_isotrope_1990}.)
In recent years, interest in surface theory within isotropic geometry has seen renewed attention, particularly due to its applications in architecture \cite{jiang_planar_2022, kilian_material-minimizing_2017, millar_designing_2023, pottmann_laguerre_2009, pottmann_discrete_2007, tellier_designing_2023}.

Our aim of this paper is to provide an exhaustive account of zero mean curvature surfaces with planar curvature lines in isotropic space.
This problem was considered by Strubecker \cite{strubecker_uber_1977} as part of his examination of zero mean curvature surfaces in isotropic space that are also affine minimal.
In the work, Strubecker finds notable examples of such surfaces, and shows that the conjugate zero mean curvature surfaces are zero mean curvature surfaces with planar curvature lines.

We will consider this topic with an emphasis on the constraint of having planar curvature lines, allowing us to use modern analytical methods modeled after the works of Abresch and Walter \cite{abresch_constant_1987, walter_explicit_1987} (also used in \cite{akamine_analysis_2020, cho_deformation_2017, cho_deformation_2018}).
Thus, after briefly describing surface theory of isotropic $3$-space in Section~\ref{sect:two}, we will focus on zero mean curvature surfaces with planar curvature lines in Section~\ref{sect:three}.
In particular, we will use the aforementioned analytical methods to give an analytic classification of all zero mean curvature surfaces with planar curvature lines in Theorem~\ref{thm:analytic}.
Then using the Weierstrass-type representation for zero mean curvature surfaces in isotropic space \cite{strubecker_differentialgeometrie_1942-1} (see also \cite{sachs_isotrope_1990, pember_weierstrass-type_2020, da_silva_holomorphic_2021, seo_zero_2021}), we will recover the Weierstrass data for these surfaces, thereby obtaining a complete classification in Theorem~\ref{thm:wdata}.
After noting few geometric facts about zero mean curvature surfaces with planar curvature lines in Section~\ref{sect:four}, we show that these surfaces constitute a $1$-parameter family of surfaces in Section~\ref{sect:five} on the level of Weierstrass-data (see Theorem~\ref{thm:deformation}).

Finally in Section~\ref{sect:six}, we explore zero mean curvature surfaces in isotropic space that are also affine minimal, and show that they are conjugate zero mean curvature surfaces of those zero mean curvature surfaces with planar curvature lines in Theorem~\ref{thm:conjugate}, recovering the result by Strubecker in \cite{strubecker_uber_1977}.
This result quickly leads to a complete classification of these surfaces in Theorem~\ref{thm:wdata2}, and also allows us to conclude immediately that these surfaces also constitute a $1$-parameter family of surfaces in Theorem~\ref{thm:deformation2}.

\textbf{Acknowledgements.} The authors would like to express their gratitude to Shintaro Akamine and Yuta Ogata for their helpful comments. The second author gratefully acknowledges the support from JST SPRING, Grant Number JPMJSP2148, including the opportunity to visit TU Wien.

\section{Preliminaries}\label{sect:two}
We first briefly describe the isotropic $3$-space and the surface theory within by following the viewpoint taken in \cite{cho_spinor_2024}.
\subsection{Isotropic \texorpdfstring{$3$}{3}-space}
Denote by $\mathbb{R}^{3,1}$ the Minkowski $4$-space equipped with bilinear form $\langle \cdot, \cdot \rangle$ of signature $(- + + +{})$, and let $\mathcal{L}$ denote the light cone, i.e.\
    \[
        \mathcal{L} = \{X \in \mathbb{R}^{3,1} : \langle X, X \rangle = 0\}.
    \]
Choosing some $\mathfrak{p}, \tilde{\mathfrak{p}} \in \mathcal{L}$ with $\langle \mathfrak{p}, \tilde{\mathfrak{p}} \rangle = 1$, the isotropic $3$-space $\mathbb{I}^3$ is given by
    \[
        \mathbb{I}^3 = \{X \in \mathbb{R}^{3,1} : \langle X, \mathfrak{p} \rangle = 0\}.
    \]
To consider coordinates, we will normalize $\mathfrak{p}$ and $\tilde{\mathfrak{p}}$ as
    \[
        \mathfrak{p} = (1,0,0,1)^t \quad\text{and}\quad \tilde{\mathfrak{p}} = \frac{1}{2}(-1,0,0,1)^t,
    \]
and the isotropic space is given by
    \[
        \mathbb{I}^3 = \{ (\mathbf{l}, \mathbf{x}, \mathbf{y}, \mathbf{l})^t \in \mathbb{R}^{3,1} \},
    \]
so that the induced metric is
    \[
        \dif{s}^2_{\mathbb{I}^3} = \dif{\mathbf{x}}^2 + \dif{\mathbf{y}}^2.
    \]
We will often refer to the coordinate space $\{(\mathbf{l}, \mathbf{x}, \mathbf{y})^t\}$ with the above metric as isotropic $3$-space, and refer to the $\mathbf{l}$-direction as \emph{vertical}.

\subsection{Surface theory}
Now for some simply-connected domain $\Sigma$, let $X: \Sigma \to \mathbb{I}^3$ be a spacelike immersion, that is, the induced metric on the tangent planes of $X$ is Riemannian.
Thus, we may locally find conformal coordinates $(u,v) \in \Sigma$ with induced conformal structure $z = u + i v$ so that there is some $\omega: \Sigma \to \mathbb{R}$ with  
    \begin{equation}\label{eqn:conformal}
        \dif{s}^2 = e^{2 \omega}(\dif{u}^2 + \dif{v}^2) = e^{2 \omega} \dif{z} \dif{\bar{z}}.
    \end{equation}

Viewing $X$ as a codimension two surface in $\mathbb{R}^{3,1}$, every fiber of the normal bundle of $X$ has signature $(1,1)$ with $\mathfrak{p}$ being a constant null section of the normal bundle.
Therefore, we may find a unique $n : \Sigma \to \mathcal{L}$ so that
    \[
        \langle \dif{X}, n \rangle = 0 \quad\text{and}\quad \langle n, \mathfrak{p} \rangle = 1.
    \]
Such $n$ is called the \emph{lightlike Gauss map} of $X$, and can be used to calculate the second fundamental form via
    \[
        \mathrm{II} = L \dif{u}^2 + 2M \dif{u}\dif{v} + N \dif{v}^2
    \]
with
    \[
        L := \langle X_{uu}, n \rangle,\quad M := \langle X_{uv}, n \rangle,\quad N := \langle X_{vv}, n \rangle.
    \]
Therefore, the mean curvature $H$ is given by
    \[
        H = \frac{1}{2}e^{-2\omega}(L+N) = \frac{1}{2}e^{-2\omega} \langle X_{uu} + X_{vv}, n \rangle = 2 e^{-2 \omega} \langle X_{z \bar{z}}, n \rangle,
    \]
while the Hopf differential is given by
    \[
        Q \dif{z}^2 = \frac{1}{4}(L - N - 2 i M) \dif{z}^2 = \langle X_{zz}, n \rangle  \dif{z}^2.
    \]

The Gauss-Weingarten equation reads
    \[\begin{cases}
        X_{zz} = 2 \omega_z X_z + Q \mathfrak{p}\\
        X_{z\bar{z}} = \frac{1}{2}e^{2\omega}H \mathfrak{p}\\
        n_z = -H X_z - 2 e^{-2 \omega} Q X_{\bar{z}}
    \end{cases}\]
so that the Gauss-Codazzi equations are
    \[\begin{cases}
        \omega_{z\bar{z}}=0,&\text{(Gauss equation)}\\
        H_z = 2 e^{-2\omega} Q_{\bar{z}}. &\text{(Codazzi equation)}
    \end{cases}\]
    
Now assume that $X$ has zero mean curvature so that $H \equiv 0$.
Then the Codazzi equation implies that the Hopf differential factor is holomorphic.
As we are interested in  zero mean curvature surfaces with planar curvature lines, we reparametrize as in \cite[Lemma 1.1]{bobenko_painleve_2000} to assume without loss of generality that $Q = -\frac{1}{2}$ so that $(u,v)$ are conformal curvature line coordinates, or \emph{isothermic coordinates}.
Here, we are assuming that there are no umbilics in $\Sigma$.

The Gauss-Weingarten equations then simplify to
    \begin{equation}\label{eqn:GW}
        \begin{cases}
        X_{uu} = \omega_u X_u - \omega_v X_v - \mathfrak{p}\\
        X_{uv} = \omega_v X_u + \omega_u X_v \\
        X_{vv} = -\omega_u X_u + \omega_v X_v + \mathfrak{p}\\
        n_u = e^{-2\omega} X_u\\
        n_v = - e^{-2\omega} X_v,
        \end{cases}
    \end{equation}
while the compatibility equation becomes
    \[
        \omega_{uu} + \omega_{vv} = 0.
    \]
\section{Classification of  zero mean curvature surfaces with planar curvature lines}\label{sect:three}
In this section, we will obtain an analytic classification for  zero mean curvature surfaces with planar curvature lines using the techniques from \cite{abresch_constant_1987, akamine_analysis_2020, cho_deformation_2017, cho_deformation_2018, walter_explicit_1987}.
Then using the Weierstrass-type representation for  zero mean curvature surfaces in isotropic space \cite[\S 88]{strubecker_differentialgeometrie_1942-1}, we will recover a canonical Weierstrass data for these surfaces.
\subsection{Analytic classification} 
We first calculate the condition on $\omega$ for a  zero mean curvature surface $X$ to have planar curvature lines.
\begin{lemma}
    For an umbilic-free  zero mean curvature surface in isotropic space, the following are equivalent:
        \begin{enumerate}
            \item $u$-curvature lines are planar.
            \item $v$-curvature lines are planar.
            \item $\omega_{uv} + \omega_u \omega_v = 0$.
        \end{enumerate}
\end{lemma}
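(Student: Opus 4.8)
The plan is to use the classical affine criterion for planarity: a regular space curve $\gamma$ in the coordinate space $\mathbb{R}^3 = \{(\mathbf{l},\mathbf{x},\mathbf{y})\}$ whose first two derivatives $\gamma'$, $\gamma''$ are linearly independent lies in an affine plane if and only if $\gamma'$, $\gamma''$, $\gamma'''$ are everywhere linearly dependent (equivalently, the torsion vanishes). Since planarity is an affine notion, it is insensitive to the degenerate isotropic metric, and I may detect linear dependence with respect to any convenient frame. The natural choice is $\{X_u, X_v, \mathfrak{p}\}$: because $X$ is spacelike, the projections of $X_u$ and $X_v$ to the $\mathbf{xy}$-plane are independent by \eqref{eqn:conformal}, while $\mathfrak{p}$ is purely vertical, so $\{X_u, X_v, \mathfrak{p}\}$ is a basis of $\mathbb{R}^3$ at each point.

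First I would treat the $u$-curvature lines $u \mapsto X(u,v_0)$. From \eqref{eqn:GW} the vectors $X_u$ and $X_{uu} = \omega_u X_u - \omega_v X_v - \mathfrak{p}$ read $(1,0,0)$ and $(\omega_u,-\omega_v,-1)$ in this frame; they are independent (the $\mathfrak{p}$-component distinguishes them), so each curvature line is non-degenerate and the criterion applies. Differentiating $X_{uu}$ in $u$ and substituting the first two equations of \eqref{eqn:GW} for $X_{uu}$ and $X_{uv}$ gives
\[
  X_{uuu} = (\omega_{uu} + \omega_u^2 - \omega_v^2)\,X_u - (\omega_{uv} + 2\omega_u\omega_v)\,X_v - \omega_u\,\mathfrak{p},
\]
and an expansion along the first row collapses to a single $2\times 2$ minor, yielding
\[
  \det(X_u, X_{uu}, X_{uuu}) = -(\omega_{uv} + \omega_u\omega_v).
\]
Thus the $u$-curvature lines are planar precisely when $\omega_{uv} + \omega_u\omega_v = 0$, establishing $(1)\Leftrightarrow(3)$.

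The $v$-curvature lines are handled by the symmetric computation. Differentiating $X_{vv} = -\omega_u X_u + \omega_v X_v + \mathfrak{p}$ in $v$ and again substituting \eqref{eqn:GW} produces
\[
  X_{vvv} = -(\omega_{uv} + 2\omega_u\omega_v)\,X_u + (\omega_{vv} + \omega_v^2 - \omega_u^2)\,X_v + \omega_v\,\mathfrak{p},
\]
and the same determinant computation gives $\det(X_v, X_{vv}, X_{vvv}) = -(\omega_{uv} + \omega_u\omega_v)$, hence $(2)\Leftrightarrow(3)$. Chaining the two equivalences yields $(1)\Leftrightarrow(2)$ as well.

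The computation itself should be routine once the frame is fixed; the delicate points are conceptual. The first is insisting that planarity is affine, so that working in the non-orthonormal, metrically degenerate frame $\{X_u, X_v, \mathfrak{p}\}$ is legitimate and the ordinary coordinate determinant detects linear dependence. The second is the passage from the pointwise determinant condition back to genuine planarity of each curve: here I would invoke the standard argument that if $\gamma''' \in \Span\{\gamma',\gamma''\}$ along the curve, then the binormal direction $\gamma'\times\gamma''$ (formed with any auxiliary Euclidean structure on $\mathbb{R}^3$) keeps a fixed direction, whence $\gamma$ lies in a fixed plane. One should also note that statement $(1)$ asks this of every leaf $\{v=v_0\}$, so that $(1)$ and $(3)$ are genuinely conditions on all of $\Sigma$, which is exactly what the determinant identities deliver.
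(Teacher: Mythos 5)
Your proposal is correct and takes essentially the same route as the paper: the same planarity criterion (linear dependence of $X_u$, $X_{uu}$, $X_{uuu}$), the same frame $\{X_u, X_v, \mathfrak{p}\}$ via the Gauss--Weingarten equations \eqref{eqn:GW}, and the same $3\times 3$ determinant collapsing to $\omega_{uv}+\omega_u\omega_v$, with your explicit coefficient $\omega_{uu}+\omega_u^2-\omega_v^2$ matching the paper's unnamed $A$. The only differences are cosmetic: you carry out the $v$-line computation explicitly where the paper says ``similarly,'' and you spell out the non-degeneracy and binormal justifications the paper leaves implicit.
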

\begin{proof}
    We have that $u$-curvature lines are planar if and only if $X_u, X_{uu}, X_{uuu}$ are linearly dependent.
    Calculating that
        \[
            X_{uuu} = A X_u + (-2\omega_u \omega_v - \omega_{uv}) X_v - \omega_u \mathfrak{p}
        \]
    for some function $A$, we have that $X_u, X_{uu}, X_{uuu}$ are linearly dependent if and only if
        \[
            0 = -\begin{vmatrix}
                    1 & 0 & 0 \\
                    \omega_u & -\omega_v & -1 \\
                    A & -2\omega_u \omega_v - \omega_{uv} & - \omega_u
                \end{vmatrix}
                = \omega_{uv} + \omega_u\omega_v.
        \]
    One can show similarly that $v$-curvature lines are planar if and only if $\omega_{uv} + \omega_u\omega_v = 0$.
\end{proof}

Thus, every  zero mean curvature surface with planar curvature lines corresponds to a solution of the following system of partial differential equations for $\omega$:
    \begin{subnumcases}{\label{eqn:pde}}
        \omega_{uu} + \omega_{vv} = 0, &\text{(compatibility condition)} \label{eqn:pde1}\\
        \omega_{uv} + \omega_u \omega_v = 0. &\text{(zero mean curvature and planarity condition)} \label{eqn:pde2}
    \end{subnumcases}
Note that $\omega \equiv c$ for some real constant $c$ is a trivial solution to \eqref{eqn:pde}; to obtain non-trivial solutions to the system of partial differential equations \eqref{eqn:pde}, we reduce the problem to a system of ordinary differential equations:
\begin{proposition}
    The non-trivial solutions to \eqref{eqn:pde} are given by
        \begin{equation}\label{eqn:omega}
            e^{\omega(u,v)} = \frac{f(u)^2 + g(v)^2}{f_u(u) + g_v(v)},
        \end{equation}
    where $f(u)$ and $g(v)$ are single-variable functions such that
        \begin{subnumcases} {\label{eqn:fandg}}
            f(u) := \omega_u e^{\omega} \label{eqn:f}\\
            g(v) := \omega_v e^{\omega} \label{eqn:g}
        \end{subnumcases}
    satisfying the system of ordinary differential equations
        \begin{subnumcases}{\label{eqn:ode}}
            f_{uu} = a f \label{eqn:ode1}\\
            f_u^2 = a f^2 + b \label{eqn:ode2}\\
            g_{vv} = - a g \label{eqn:ode3}\\
            g_v^2 = -a g^2 + b \label{eqn:ode4}
        \end{subnumcases}
    for some real constants $a,b$ such that $a^2 + b^2 \neq 0$.
\end{proposition}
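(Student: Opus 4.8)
The plan is to change variables from $\omega$ to $h := e^{\omega}$, under which both equations in \eqref{eqn:pde} become transparent. First I would observe that
\[
    h_{uv} = (e^{\omega})_{uv} = (\omega_{uv} + \omega_u \omega_v)\,e^{\omega},
\]
so the planarity condition \eqref{eqn:pde2} is equivalent to $h_{uv} = 0$; since $\Sigma$ is simply connected, this forces $h = P(u) + R(v)$ for single-variable functions $P, R$. In particular $f = \omega_u e^{\omega} = h_u = P'(u)$ and $g = \omega_v e^{\omega} = h_v = R'(v)$ depend on $u$ and $v$ alone, which legitimizes the notation $f(u)$, $g(v)$ in \eqref{eqn:fandg}. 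Next I would rewrite the compatibility condition \eqref{eqn:pde1}: substituting $\omega = \log h$ into $\omega_{uu} + \omega_{vv} = 0$ and clearing denominators gives $h(h_{uu} + h_{vv}) = h_u^2 + h_v^2$, that is, $h(f_u + g_v) = f^2 + g^2$. Away from the trivial locus where $f = g = 0$, dividing by $f_u + g_v$ yields the formula \eqref{eqn:omega}.

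To extract the ordinary differential equations I would differentiate the identity $h(f_u + g_v) = f^2 + g^2$. Using that $f, f_u$ depend only on $u$ and $g, g_v$ only on $v$, differentiation in $u$ gives $h f_{uu} = f(f_u - g_v)$ and in $v$ gives $h g_{vv} = g(g_v - f_u)$. Dividing (on the open set where $f, g \neq 0$) produces the chain $f_{uu}/f = (f_u - g_v)/h = -g_{vv}/g$; since the two outer expressions depend only on $u$ and only on $v$ respectively, a separation-of-variables argument forces them to equal a common real constant $a$, giving $f_{uu} = a f$ and $g_{vv} = -a g$, i.e.\ \eqref{eqn:ode1} and \eqref{eqn:ode3}. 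The middle equality then reads $a h = f_u - g_v$; multiplying this by $f_u + g_v$ and substituting $h(f_u + g_v) = f^2 + g^2$ yields $a(f^2 + g^2) = f_u^2 - g_v^2$, hence $f_u^2 - a f^2 = g_v^2 + a g^2$. As the left side depends only on $u$ and the right only on $v$, both equal a single constant $b$, which simultaneously establishes \eqref{eqn:ode2} and \eqref{eqn:ode4} with the \emph{same} $b$. Finally, $a = b = 0$ would force $f_u \equiv g_v \equiv 0$ and, via \eqref{eqn:omega}, $f \equiv g \equiv 0$, i.e.\ $\omega$ constant; thus nontriviality corresponds exactly to $a^2 + b^2 \neq 0$.

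The main obstacle I expect is the bookkeeping around the loci where $f$ or $g$ vanishes, since the separation argument and several divisions above are a priori valid only where $h, f, g \neq 0$. I would resolve this by using that $\omega$ is harmonic, hence real-analytic, so $f$ and $g$ are real-analytic; their zero sets are therefore discrete unless the functions vanish identically, the separation constants $a, b$ are well-defined and constant on the connected domain, and all identities extend by continuity. The genuinely degenerate cases $f \equiv 0$ or $g \equiv 0$ (forcing $\omega$ to be an affine function of a single variable) should be checked directly to confirm that they too satisfy \eqref{eqn:ode} for a suitable choice of $a, b$ with $a^2 + b^2 \neq 0$, so that the stated family captures all nontrivial solutions.
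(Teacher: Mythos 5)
Your proof is correct, and while it shares the paper's overall arc (define $f$, $g$, derive the formula for $e^{\omega}$, then separate variables), several steps take a genuinely different and in places cleaner route. Where you linearize via $h := e^{\omega}$ and read \eqref{eqn:pde2} as $h_{uv} = 0$, the paper instead divides \eqref{eqn:pde2} by $\omega_v$ and integrates $\omega_{uv}/\omega_v = -\omega_u$ to obtain $\log \omega_v = -\omega + k_1(v)$ — a step that tacitly assumes $\omega_v$ is nonvanishing (indeed positive, to take the logarithm), which your substitution avoids entirely. For the ODEs, the paper differentiates \eqref{eqn:omega} in $u$ and $v$ to get $f(f_u^2 - g_v^2) = (f^2+g^2)f_{uu}$ and $-g(f_u^2 - g_v^2) = (f^2+g^2)g_{vv}$, then \emph{defines} $a := (f_u^2 - g_v^2)/(f^2+g^2)$ and argues $a_v = 0$ and $a_u = 0$ from single-variable dependence; your chain $f_{uu}/f = (f_u - g_v)/h = -g_{vv}/g$ is the same computation (multiply through by $f_u + g_v$) organized as textbook separation of variables. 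Your route to $b$ is slicker: the paper multiplies \eqref{eqn:ode1} and \eqref{eqn:ode3} by $2f_u$ and $2g_v$ and integrates, producing two a priori different constants $b$ and $\tilde{b}$ which must then be shown equal using the definition of $a$, whereas your identity $f_u^2 - af^2 = g_v^2 + ag^2$ yields the common constant in one stroke (and your intermediate relation $ah = f_u - g_v$ is exactly the identity $e^{\omega} = (f_u - g_v)/\alpha^2$ that the paper rederives separately in Section 4). Your explicit treatment of the vanishing loci and of the degenerate cases $f \equiv 0$ or $g \equiv 0$ — via real-analyticity of the harmonic $\omega$, with the degenerate case reducing to $\omega$ affine in one variable, so that $f = c\,e^{cu+d}$ satisfies \eqref{eqn:ode} with $a = c^2$, $b = 0$ — makes rigorous what the paper leaves implicit, since the paper only absorbs $f \equiv 0$ or $g \equiv 0$ into Case (a) after the fact. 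One cosmetic quibble: deducing $f \equiv g \equiv 0$ from $a = b = 0$ is better done from the polynomial identity $h(f_u + g_v) = f^2 + g^2$ than ``via \eqref{eqn:omega}'', since \eqref{eqn:omega} itself presupposes $f_u + g_v \neq 0$.
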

\begin{proof}
    Since $\omega \equiv c$ is a trivial solution to \eqref{eqn:pde}, we assume that $\omega \not \equiv c$.
    From \eqref{eqn:pde2}, we see that
        \[
            \frac{\omega_{uv}}{\omega_v} = -\omega_u
        \]
    so that integrating both sides with respect to $u$ gives
        \[
            \log \omega_v = -\omega + k_1(v)
        \]
    for some function $k_1(v)$.
    Therefore, we have
        \[
            g(v) := e^{k_1(v)} = \omega_v e^\omega.
        \]
    Similarly, \eqref{eqn:pde2} also implies that
        \[
            f(u) := e^{k_2(u)} = \omega_u e^{\omega}
        \]
    for some function $k_2(u)$.
    Thus, we have by \eqref{eqn:pde1} that
        \begin{align*}
            0 &= \omega_{uu} + \omega_{vv} = (f e^{-\omega})_u +  (g e^{-\omega})_v \\
                &= f_u e^{-\omega} - f \omega_u e^{-\omega} + g_v e^{-\omega} - g \omega_v e^{-\omega} = e^{-\omega}(f_u - f \omega_u + g_v - g \omega_v),
        \end{align*}
    which in turn tells us
        \[
            0 = f_u - f \omega_u + g_v - g \omega_v = f_u - f^2 e^{-\omega} + g_v - g^2 e^{-\omega}
        \]
    giving us \eqref{eqn:omega}.

    Now we will show that $f$ and $g$ satisfies \eqref{eqn:ode}.
    Taking the $u$-derivative of \eqref{eqn:omega}, we see by \eqref{eqn:f} that
        \[
           f = \omega_u e^{\omega} = \frac{2ff_u (f_u + g_v) - (f^2+g^2)f_{uu}}{(f_u + g_v)^2}
        \]
    so that
        \begin{align*}
            f (f_u^2 - g_v^2) - (f^2+g^2)f_{uu} = 0.
        \end{align*}
    Thus, we have
        \begin{equation}\label{eqn:ode1almost}
            f_{uu} = a f
        \end{equation}
    where
        \begin{equation}\label{eqn:adef}
            a := \frac{f_u^2 - g_v^2}{f^2+g^2}.
        \end{equation}
    Note that since $f_{uu}$ and $f$ are functions of $u$ alone, \eqref{eqn:ode1almost} implies $a_v = 0$.
    
    On the other hand, taking the $v$-derivative of \eqref{eqn:omega} and using \eqref{eqn:g}, we have
        \[
            g = \omega_v e^{\omega} = \frac{2g g_v (f_u + g_v) - (f^2 + g^2)g_{vv}}{(f_u + g_v)^2},
        \]
    which implies
        \[
            -g (f_u^2 - g_v^2) - (f^2 + g^2)g_{vv} = 0.
        \]
    Therefore, we have by \eqref{eqn:adef} that
        \[
            g_{vv} = - \frac{f_u^2 - g_v^2}{f^2 + g^2} g = -a g,
        \]
    but since $g_{vv}$ and $g$ are functions of $v$ alone, we have $a_u = 0$.
    Thus we conclude that $a$ is some real constant, giving us \eqref{eqn:ode1} and \eqref{eqn:ode3}.

    Finally, multiplying \eqref{eqn:ode1} by $2f_u$ and integrating by $u$ gives us
        \[
            f_u^2 = a f^2 + b,
        \]
    while multiplying \eqref{eqn:ode3} by $2g_v$ and integrating by $v$ implies
        \[
            g_v^2 = - a g^2 + \tilde{b}
        \]
    for some constants of integration $b$ and $\tilde{b}$.
    Thus by \eqref{eqn:adef}, we have
        \[
            b - \tilde{b} = f_u^2 - g_v^2 - a(f^2 + g^2) = 0,
        \]
    giving us \eqref{eqn:ode2} and \eqref{eqn:ode4}.
\end{proof}

Before we solve \eqref{eqn:ode} explicitly, we derive an additional condition for $b$:
\begin{lemma}\label{lemma:bcond}
	The constant $b$ in \eqref{eqn:ode} is non-negative, that is, $b \geq 0$.
\end{lemma}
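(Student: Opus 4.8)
The plan is to exploit the two first integrals \eqref{eqn:ode2} and \eqref{eqn:ode4} already obtained in the preceding proposition, namely the identities $b = f_u^2 - a f^2$ and $b = g_v^2 + a g^2$, together with the fact that $f$ and $g$ are real-valued functions. The crucial observation is that in these two expressions for the single constant $b$, the term involving $a$ appears with \emph{opposite} signs. Consequently, for any prescribed sign of $a$, at least one of the two identities is manifestly a sum of non-negative quantities, and that is the one I would read off.

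Concretely, I would split into cases according to the sign of $a$. If $a \geq 0$, then \eqref{eqn:ode4} gives $b = g_v^2 + a g^2$; here $g_v^2 \geq 0$ and $a g^2 \geq 0$ because $a \geq 0$ and $g$ is real, so $b \geq 0$. If instead $a \leq 0$, then \eqref{eqn:ode2} gives $b = f_u^2 - a f^2 = f_u^2 + (-a) f^2$, and both summands are again non-negative since $-a \geq 0$ and $f$ is real, whence $b \geq 0$. As the two cases overlap at $a = 0$ and together exhaust all possibilities, we conclude $b \geq 0$ unconditionally.

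There is essentially no obstacle here beyond recognizing that the equation one must invoke is dictated by the sign of $a$: when $a > 0$ the function $f$ is of exponential type and $f_u^2 - a f^2$ need not be non-negative, so one is forced to use the identity for the oscillatory function $g$, and symmetrically when $a < 0$. The only hypothesis genuinely used is the reality of $f$ and $g$, which is guaranteed by their definitions \eqref{eqn:f} and \eqref{eqn:g} as $\omega_u e^\omega$ and $\omega_v e^\omega$ with $\omega$ real; this is what forces the squared terms to be non-negative. In particular, one might at first expect to need the positivity of $e^\omega$ (equivalently $f_u + g_v > 0$), but this turns out to be unnecessary for the sign of $b$.
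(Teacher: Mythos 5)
Your proof is correct and is essentially the paper's own argument: the paper makes the identical case split on the sign of $a$, invoking \eqref{eqn:ode4} when $a \geq 0$ and \eqref{eqn:ode2} when $a < 0$, merely phrased as a proof by contradiction (assuming $b < 0$ and deriving $g_v^2 < 0$ or $f_u^2 < 0$) instead of your direct reading of $b$ as a sum of non-negative terms.
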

\begin{proof}
	Suppose for contradiction that $b < 0$.
    If $a \geq 0$, then \eqref{eqn:ode4} gives us
        \[
            g_v^2 = -a g^2 + b < 0,
        \]
    which is a contradiction.
	If $a < 0$, then \eqref{eqn:ode2} implies 
        \[
            f_u^2 = a f^2 + b < 0,
        \]
    which is also a contradiction.
\end{proof}
We also note here that by switching the parameters $(u,v)$, the system of ordinary differential equations \eqref{eqn:ode} are symmetric in $f$ and $g$: thus, we may assume without loss of generality that $a \geq 0$.
Under these conditions of $a \geq 0$ and $b \geq 0$, let us take the constants $\alpha := \sqrt{a}$ and $\beta := \sqrt{b}$, and in the case $\alpha \neq 0$, we can solve \eqref{eqn:ode1} and \eqref{eqn:ode3} to obtain the general solutions
    \begin{subnumcases}{\label{eqn:gen}}
         f = C_1 e^{\alpha u} + C_2 e^{-\alpha u} \label{eqn:gen1}\\
         g = C_3 e^{i \alpha v} + C_4 e^{-i \alpha v} \label{eqn:gen2}
    \end{subnumcases}
where $C_1$, $C_2$, $C_3$, and $C_4$ are constants such that $C_1, C_2 \in \mathbb{R}$ and $C_3 = \bar{C}_4 \in \mathbb{C}$ since $f$ and $g$ are real-valued functions.
Then using \eqref{eqn:ode2} and \eqref{eqn:ode4}, we obtain additional relations on the constants:
    \begin{equation}\label{eqn:cicond}
        C_1 C_2 = - \frac{\beta^2}{4 \alpha^2} = - C_3 C_4 = - |C_3|^2.
    \end{equation}
Using these relations, we can identify the initial conditions for $f$ and $g$ using the following lemma:

\begin{lemma}\label{lemma:init}
	The solutions $f$ (resp. $g$) to \eqref{eqn:ode} has a zero if and only if $b>0$ or $f \equiv 0$ (resp. $g \equiv 0$).
\end{lemma}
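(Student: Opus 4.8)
The plan is to prove the biconditional in Lemma~\ref{lemma:init} by analyzing the two ordinary differential equations \eqref{eqn:ode2} and \eqref{eqn:ode4} that govern $f$ and $g$. Since the statement is symmetric in $f$ and $g$ (with $a$ replaced by $-a$), I would establish the claim for $f$ and note the argument for $g$ is analogous. The key algebraic fact to exploit is the first-order relation $f_u^2 = af^2 + b$ from \eqref{eqn:ode2}, which ties the value of $f$ directly to the value of its derivative at any point.

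**First I would** prove the easy direction: if $f \equiv 0$, then trivially $f$ has a zero, so assume instead that $b > 0$ and argue $f$ must vanish somewhere. Using the explicit general solution \eqref{eqn:gen1}, namely $f = C_1 e^{\alpha u} + C_2 e^{-\alpha u}$ (in the case $\alpha \neq 0$), together with the constraint \eqref{eqn:cicond} giving $C_1 C_2 = -\beta^2/(4\alpha^2) < 0$, I would observe that $C_1$ and $C_2$ have opposite signs, forcing $f$ to change sign and hence to have a zero by the intermediate value theorem. I would also handle separately the degenerate case $\alpha = 0$: here $b > 0$ forces $f_u^2 = b$ to be a nonzero constant, so $f$ is a nonconstant affine function of $u$ and therefore has a zero.

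**For the converse**, suppose $f$ has a zero at some $u_0$, so $f(u_0) = 0$, and suppose toward contradiction that $b = 0$ and $f \not\equiv 0$. Evaluating \eqref{eqn:ode2} at $u_0$ gives $f_u(u_0)^2 = a \cdot 0 + b = 0$, so both $f(u_0) = 0$ and $f_u(u_0) = 0$. Since $f$ satisfies the linear second-order equation \eqref{eqn:ode1}, $f_{uu} = af$, uniqueness of solutions to this linear initial value problem forces $f \equiv 0$, contradicting $f \not\equiv 0$. Hence either $b > 0$ or $f \equiv 0$.

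**The main obstacle** I expect is bookkeeping the degenerate case $\alpha = 0$ (equivalently $a = 0$) cleanly, since the general solution \eqref{eqn:gen1} and the constraint \eqref{eqn:cicond} are stated under the assumption $\alpha \neq 0$; in that regime one must return to the defining equations \eqref{eqn:ode1} and \eqref{eqn:ode2} directly, where $f_{uu} = 0$ and $f_u^2 = b$ make the analysis elementary but require a separate treatment. The converse direction via uniqueness for the linear ODE is the conceptually cleanest route and avoids any case distinction on $\alpha$, so I would favor presenting it that way rather than through the explicit solution formulas.
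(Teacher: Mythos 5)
Your proof is correct, but it takes a genuinely different route from the paper's in both directions, and the comparison is instructive. For the converse, the paper works from the explicit general solution \eqref{eqn:gen1}: with $b=0$ (hence $\alpha>0$, since $a^2+b^2\neq 0$), the constraint \eqref{eqn:cicond} gives $C_1C_2=0$, so $f$ is a single exponential, which vanishes somewhere only if it vanishes identically. Your argument---evaluate \eqref{eqn:ode2} at the zero to get $f_u(u_0)=0$, then invoke uniqueness for the linear initial value problem $f_{uu}=af$---is more elementary, makes no case distinction on $\alpha$, and does not depend on the solution formulas at all; the same two lines handle $g$ verbatim. One small repair: the negation of ``$b>0$ or $f\equiv 0$'' is $b\leq 0$ and $f\not\equiv 0$, so you should either cite Lemma~\ref{lemma:bcond} (as the paper does) before assuming $b=0$, or observe that $f_u(u_0)^2=b$ itself forces $b\geq 0$. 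For the forward direction, the paper exhibits explicit zeros ($u_0=\tfrac{1}{2\alpha}\log(\beta^2/4C_1^2\alpha^2)$ for $f$, and $v_0=\tfrac{1}{2\alpha}(\pi-2\theta)$ after writing $C_3=\tfrac{\beta}{2\alpha}e^{i\theta}$ for $g$), whereas you argue via $C_1C_2<0$ and the intermediate value theorem; both work, and your separate treatment of $\alpha=0$ (where $f_u^2=b>0$ makes $f$ nonconstant affine) actually fills a small lacuna, since the paper's proof of this direction is written only for solutions of the form \eqref{eqn:gen}, i.e.\ $\alpha\neq 0$, yet the lemma is later invoked in Case~(b). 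The one place you are too quick is the claim that the $g$ case is ``analogous'' via the symmetry $a\mapsto -a$: your sign-change mechanism does not transfer, because \eqref{eqn:cicond} gives $C_3C_4=|C_3|^2>0$ rather than a product of opposite signs, and the $a\mapsto -a$ symmetry would require your $f$-argument to cover $a<0$, which the exponential ansatz \eqref{eqn:gen1} does not. The fix is immediate---for $b>0$ one has $g=2\Re(C_3e^{i\alpha v})$, a sinusoid of positive amplitude $\beta/\alpha$, which clearly vanishes---but it is a different (if easier) computation and should be stated rather than waved at.
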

\begin{proof}
	To show one direction, suppose that there exists some $u_0$ such that $f(u_0)=0$.
    Using Lemma~\ref{lemma:bcond} we know that $b \geq 0$; thus, we have only to see that if $b=0$ then $f \equiv 0$.
    When $b=0$ so that $\beta = 0$ (and $\alpha >0$), \eqref{eqn:cicond} implies that
        \[
            C_1 C_2 = 0.
        \]
    Assuming without loss of generality that $C_2 = 0$, we have $f = C_1 e^{\alpha u}$ which has a zero only if $C_1 = 0$, that is, $f \equiv 0$.
    The statement for $g$ is proven similarly.

	To show the other direction, we have only to see that if $b>0$ then $f$ and $g$ has a zero.
    So let $f$ be of the form \eqref{eqn:gen1} satisfying \eqref{eqn:cicond} with $\beta > 0$.
    Then if we let
        \[
             u_0 := \frac{1}{2\alpha}\log\left(\frac{\beta^2}{4 C_1^2 \alpha^2}\right) \in \mathbb{R},
        \]
    we can check directly that $f(u_0) = 0$.
    
    On the other hand, if $g$ takes the form \eqref{eqn:gen2} satisfying \eqref{eqn:cicond} with $\beta > 0$, then we may write
        \[
            \bar{C}_4 = C_3 = \frac{\beta}{2 \alpha}e^{i \theta}
        \]
    for some $\theta \in \mathbb{R}$.
    Then if we have
        \[
            v_0 = \frac{1}{2 \alpha}(\pi - 2 \theta),
        \]
    then we can verify that $g(v_0) = 0$.
\end{proof}

Now we are in a position to solve for $f$ and $g$ explicitly using \eqref{eqn:gen} with \eqref{eqn:cicond}.

\textbf{Case (a)}: Let $\alpha > 0$ and $\beta = 0$. Then we have by \eqref{eqn:cicond} that
    \[
        C_1 C_2 = 0 = |C_3|^2
    \]
so that we obtain 
    \[\begin{cases}
        f = C e^{\pm \alpha u}\\
        g = 0
    \end{cases}\]
for some nonzero real constant $C$.
By considering coordinate change in $u \mapsto -u$, we may assume without loss of generality that $f = C e^{\alpha u}$ which in turn implies that $C > 0$ since
    \[
        0 < e^\omega = \frac{f^2 + g^2}{f_u + g_v} = \frac{C}{\alpha} e^{- \alpha u}.
    \]
Then a parameter change in the form of $u \mapsto -u - \frac{1}{\alpha}\log C$, allows us to simplify so that
    \[\begin{cases}
        f = e^{-\alpha u}\\
        g = 0.
    \end{cases}\]

\textbf{Case (b)}: If $\alpha = 0$ and $\beta > 0$, then \eqref{eqn:ode} becomes
        \[\begin{cases}
            f_{uu} = 0\\
            f_u^2 = \beta^2
        \end{cases} \quad\text{and}\quad
        \begin{cases}
            g_{vv} = 0\\
            g_v^2 = \beta^2
        \end{cases},\]
and using Lemma~\ref{lemma:init}, we have that
    \[\begin{cases}
        f = \pm \beta u\\
        g = \pm \beta v.
    \end{cases}\]
Considering coordinate changes and the fact that $f_u + g_v \neq 0$ from \eqref{eqn:omega}, we may assume without loss of generality that
    \[\begin{cases}
        f = \beta u\\
        g = \beta v.
    \end{cases}\]

\textbf{Case (c)}: Finally, if $\alpha > 0$ and $\beta > 0$, then using Lemma~\ref{lemma:init} on general solutions \eqref{eqn:gen}, we may assume $f(0) = g(0) = 0$, so that
    \[
        0 = C_1 + C_2 = C_3 + C_4.
    \]
Thus, \eqref{eqn:cicond} tells us
    \[
        C_1 = \pm \frac{\beta}{2\alpha} \quad\text{and}\quad C_3 = \pm \frac{i \beta}{2\alpha},
    \]
and the solutions are given by
    \[\begin{cases}
        f = \pm \frac{\beta}{\alpha} \sinh \alpha u\\
        g = \pm \frac{\beta}{\alpha} \sin \alpha v.
    \end{cases}\]
Again, by considering coordinate changes and shift in parameters $v \mapsto v + \frac{\pi}{\alpha}$, we may assume
    \[\begin{cases}
        f = \frac{\beta}{\alpha} \sinh \alpha u\\
        g = \frac{\beta}{\alpha} \sin \alpha v.
    \end{cases}\]

We summarize the discussions in the section in the following classification of all zero mean curvature surfaces with planar curvature lines (see also Figure~\ref{fig:classification}):
\begin{theorem}\label{thm:analytic}
    Let $X : \Sigma \to \mathbb{I}^3$ be a non-planar zero mean curvature immersion conformally parametrized by $(u,v) \in \Sigma$ so that
        \[
            \dif{s}^2 = e^{2\omega}(\dif{u}^2 + \dif{v}^2)
        \]
    with normalized Hopf differential factor $Q \dif{z}^2 = -\frac{1}{2} \dif{z}^2$.
    Then $X$ has planar curvature lines if and only if $\omega$ satisfies one of the following cases:
    \begin{description}
        \item[Case (1)] If $\omega$ is not constant,
        \item[Case (1a)] For $\alpha > 0$ and $\beta = 0$, we have
            \[\begin{cases}
                f = e^{-\alpha u}\\
                g = 0
                \end{cases}\]
            so that
            \[
                e^\omega = -\frac{1}{\alpha}e^{-\alpha u},
            \]
        and thus is a surface of revolution.
        \item[Case (1b)] For $\alpha = 0$ and $\beta > 0$, we have
            \[\begin{cases}
                f = \beta u\\
                g = \beta v,
            \end{cases}\]
            implying
            \[
                e^\omega = \frac{\beta}{2}(u^2 + v^2),
            \]
        and thus is non-periodic in both the $u$-direction and the $v$-direction.
        \item[Case (1c)] For $\alpha > 0$ and $\beta > 0$, we have
            \[\begin{cases}
                f = \frac{\beta}{\alpha} \sinh \alpha u\\
                g = \frac{\beta}{\alpha} \sin \alpha v,
            \end{cases}\]
            giving rise to
            \[
                e^\omega = \frac{\beta}{\alpha^2}(\cosh \alpha u - \cos \alpha v),
            \]
        and thus is non-periodic in the $u$-direction, but periodic in the $v$-direction.
        \item[Case (2)] We have that $\omega \equiv c$ for some $c \in \mathbb{R}$.
    \end{description}
\end{theorem}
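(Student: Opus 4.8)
The plan is to assemble the pieces established earlier in this section, as the theorem is essentially a synthesis of the preceding case analysis. I would begin with the Lemma opening this section, which for an umbilic-free zero mean curvature surface equates planarity of either family of curvature lines with the single relation $\omega_{uv} + \omega_u \omega_v = 0$. Combined with the compatibility equation $\omega_{uu} + \omega_{vv} = 0$ that the zero mean curvature hypothesis forces, this shows that $X$ has planar curvature lines if and only if $\omega$ solves the system \eqref{eqn:pde}. At this point the analysis bifurcates along the alternative already built into the statement: either $\omega$ is constant, which is the trivial solution of \eqref{eqn:pde} and yields Case (2), or $\omega$ is non-constant and we must recover Cases (1a)--(1c).

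For the non-constant branch I would invoke the Proposition to write every non-trivial solution in the form \eqref{eqn:omega}, with $f(u)$ and $g(v)$ constrained by the ordinary differential equations \eqref{eqn:ode} for real constants $a, b$ satisfying $a^2 + b^2 \neq 0$. Lemma~\ref{lemma:bcond} then pins $b \geq 0$, while the symmetry of \eqref{eqn:ode} under the interchange of $u$ and $v$ allows me to assume $a \geq 0$ without loss of generality. Setting $\alpha := \sqrt{a}$ and $\beta := \sqrt{b}$, the non-degeneracy $a^2 + b^2 \neq 0$ rules out only $\alpha = \beta = 0$, leaving precisely the three mutually exclusive regimes $(\alpha > 0, \beta = 0)$, $(\alpha = 0, \beta > 0)$, and $(\alpha > 0, \beta > 0)$. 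Each is disposed of by the explicit computations of Cases (a), (b), and (c): solving \eqref{eqn:ode1} and \eqref{eqn:ode3}, imposing the constant relations \eqref{eqn:cicond}, and applying Lemma~\ref{lemma:init} to locate the zeros of $f$ and $g$ so as to fix the initial data. The remaining freedom is absorbed by reflections and shifts of the coordinates, with the required positivity of $e^\omega$ in \eqref{eqn:omega} fixing the leftover signs; substituting the normalized $f$ and $g$ back into \eqref{eqn:omega} produces the displayed closed forms of $e^\omega$, matched to Cases (1a), (1b), (1c) in that order.

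For the converse it suffices to observe that each $\omega$ listed in Cases (1a)--(1c) is by construction of the shape \eqref{eqn:omega} with $f, g$ solving \eqref{eqn:ode}, hence satisfies \eqref{eqn:pde2}, while $\omega \equiv c$ of Case (2) satisfies it trivially; the opening Lemma then returns planar curvature lines in every case. I expect the main obstacle to be bookkeeping rather than any single computation: one must verify that the reductions used to reach the canonical $f, g$---the interchange of $u, v$ giving $a \geq 0$, the reflection $u \mapsto -u$, and shifts such as $v \mapsto v + \frac{\pi}{\alpha}$---are genuine isotropic congruences that neither discard admissible surfaces nor merge distinct regimes, and that the sign choices forced by $e^\omega > 0$ remain consistent throughout. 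Finally I would read off the asserted geometric features---that Case (1a) is a surface of revolution and that periodicity in $v$ occurs exactly in Case (1c)---directly from the explicit $e^\omega$ together with the Gauss--Weingarten equations \eqref{eqn:GW}, thereby completing the classification.
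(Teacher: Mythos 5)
Your proposal is correct and follows essentially the same route as the paper: the theorem there is explicitly a summary of the section's preceding work, namely the opening Lemma reducing planarity to $\omega_{uv}+\omega_u\omega_v=0$, the Proposition reducing \eqref{eqn:pde} to the ODE system \eqref{eqn:ode}, Lemma~\ref{lemma:bcond} and the $u\leftrightarrow v$ symmetry giving $b\geq 0$ and $a\geq 0$, and the explicit Cases (a)--(c) normalized by Lemma~\ref{lemma:init} and coordinate changes. Your added remarks on the converse direction and on checking that the normalizations are genuine congruences are consistent with, and implicit in, the paper's argument.
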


\begin{figure}
    \includegraphics[width=0.4\linewidth]{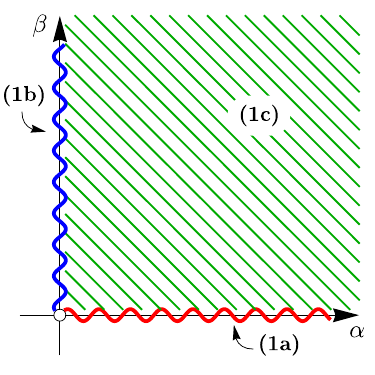}
    \caption{Classification diagram of zero mean curvature surfaces with planar curvature lines}
    \label{fig:classification}
\end{figure}

\begin{remark}
    Note that we have dropped the umbilic-free condition as the solutions to \eqref{eqn:pde} as given in Theorem~\ref{thm:analytic} extends globally.
\end{remark}

\subsection{Recovering the Weierstrass data}
As in the cases of zero mean curvatures surfaces in Euclidean space and Minkowski $3$-space, zero mean curvature surfaces in isotropic $3$-space also enjoy a \emph{Weierstrass-type representation}, given as follows:
\begin{fact}{\cite[\S 88]{strubecker_differentialgeometrie_1942-1}}
    Any zero mean curvature surface $X : \Sigma \to \mathbb{I}^3$ over a simply-connected domain $\Sigma$ can be locally be represented as
        \[
            X = \Re \int (h, 1, -i) \eta
        \]
    for some meromorphic function $h$ and holomorphic $1$-form $\eta \dif{z}$ with holomorphic $h^2 \eta$.
    Then the zero mean curvature surface has induced metric
        \[
            \dif{s}^2 = |\eta|^2 
        \]
    with the Hopf differential
        \[
            Q \dif{z}^2 = \frac{1}{2} \eta \dif{h}
        \]
    while the lightlike Gauss map is given by
        \[
            n = -\frac{1}{2}(1 + |h|^2, 2 \Re h, -2 \im h, -1 + |h|^2).
        \]
    We call $(h, \eta)$ the Weierstrass data.
\end{fact}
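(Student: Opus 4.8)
The plan is to run the usual Weierstrass construction in the codimension-two model $\mathbb{I}^3 \subset \mathbb{R}^{3,1}$. First I would exploit $H \equiv 0$: the middle Gauss--Weingarten equation degenerates to $X_{z\bar z} = 0$, so $X$ is harmonic and $\phi := X_z$ is a $\mathbb{C}^4$-valued holomorphic map, i.e.\ $\phi\,\dif{z}$ is a holomorphic $1$-form. Writing $X = (\mathbf{l}, \mathbf{x}, \mathbf{y}, \mathbf{l})^t$ in the coordinates of $\mathbb{I}^3$, I would read off the shape of $\phi$ from conformality of $\dif{s}^2 = \dif{\mathbf{x}}^2 + \dif{\mathbf{y}}^2$: the vanishing of its $(2,0)$-part gives $\mathbf{x}_z^2 + \mathbf{y}_z^2 = 0$, hence, after fixing orientation, $\mathbf{x}_z = i\,\mathbf{y}_z$; equivalently $F := \mathbf{x} + i\mathbf{y}$ is holomorphic.

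Next I would set $\eta := 2\mathbf{x}_z$, so that $\eta\,\dif{z} = \dif{F}$ is holomorphic, and $h := 2\mathbf{l}_z/\eta$, which is meromorphic because $\mathbf{l}$ harmonic makes $\mathbf{l}_z$ holomorphic. These choices give $\phi = \tfrac{1}{2}\eta\,(h, 1, -i, h)^t$, and integrating $\dif{X} = 2\Re(\phi\,\dif{z})$ on the simply-connected $\Sigma$ while discarding the redundant fourth coordinate (which equals the first, placing $X$ in $\mathbb{I}^3$) produces exactly $X = \Re\int (h, 1, -i)\,\eta$. Both integrands that actually occur, namely $\eta\,\dif{z}$ and $h\eta\,\dif{z} = 2\mathbf{l}_z\,\dif{z}$, are holomorphic, so the real parts of the integrals are well-defined.

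The three stated formulas I would then check by substitution. The metric is immediate from $\dif{\mathbf{x}}^2 + \dif{\mathbf{y}}^2 = |\dif{F}|^2 = |\eta|^2\,\dif{z}\,\dif{\bar z}$. For the Hopf differential I would differentiate once more to get $X_{zz} = \tfrac{1}{2}(\eta_z h + \eta h_z,\ \eta_z,\ -i\eta_z,\ \eta_z h + \eta h_z)^t$ and pair it with the proposed $n$; the $\eta_z$-contributions cancel, leaving $\langle X_{zz}, n\rangle = \tfrac{1}{2}\eta h_z$, that is $Q\,\dif{z}^2 = \tfrac{1}{2}\eta\,\dif{h}$. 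For the Gauss map I would avoid solving from scratch and instead verify that the asserted $n$ meets the three defining properties of the lightlike Gauss map---$\langle n, n\rangle = 0$, $\langle n, \mathfrak{p}\rangle = 1$, and $\langle X_z, n\rangle = 0$---each being a short computation in the $(-+++{})$ form; uniqueness of $n$ then identifies it.

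The step I expect to demand the most care is the bookkeeping of the holomorphicity hypotheses together with the sign conventions. At a genuine immersion point the metric $|\eta|^2$ is nonzero, so $\eta$ has no zeros there and $h$ is automatically holomorphic; poles of $h$ can only sit at zeros of $\eta$, i.e.\ at branch points. The construction delivers $\eta\,\dif{z}$ and $h\eta\,\dif{z}$ holomorphic, while the Codazzi equation additionally forces the Hopf differential $Q = \tfrac{1}{2}\eta h_z$ to be holomorphic; the single hypothesis that $h^2\eta$ be holomorphic is the clean way to package all of these, since at a pole of $h$ the order estimate $\operatorname{ord}(h^2\eta) = 2\operatorname{ord}(h) + \operatorname{ord}(\eta) \geq 0$ dominates both $\operatorname{ord}(h\eta) \geq 0$ and $\operatorname{ord}(\eta h_z) \geq 0$. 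Finally, I would track the orientation choice from the conformality step carefully so that the signs in $n$ and in $Q\,\dif{z}^2 = \tfrac{1}{2}\eta\,\dif{h}$ emerge exactly as stated; the same computations, read backwards, give the converse that any such data defines a zero mean curvature surface and justify calling $(h,\eta)$ the Weierstrass data.
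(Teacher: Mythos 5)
Your proposal is correct, but note that the paper offers no proof to compare against: this statement is quoted as a \emph{Fact} from Strubecker \cite[\S 88]{strubecker_differentialgeometrie_1942-1}, so you have supplied a derivation where the authors only cite. Your route is the natural one and fits the paper's conventions exactly. Checking against Section~\ref{sect:two}: since $X_{z\bar z} = \frac{1}{2}e^{2\omega}H\mathfrak{p}$, $H\equiv 0$ indeed gives harmonicity; conformality in the $(-+++{})$ form kills the $\mathbf{l}$-contributions and yields $\mathbf{x}_z^2+\mathbf{y}_z^2=0$, so $F=\mathbf{x}+i\mathbf{y}$ is holomorphic after the orientation choice $\mathbf{x}_z=i\mathbf{y}_z$; and your three computations all verify: $\dif{s}^2=|\eta|^2\dif{z}\dif{\bar z}$, $\langle X_{zz},n\rangle=\tfrac{1}{2}\eta h_z$ (the $\eta_z$-terms do cancel against the $\mathfrak{p}$-component of $n$), and the asserted $n$ satisfies $\langle n,n\rangle=0$, $\langle n,\mathfrak{p}\rangle=1$, $\langle X_z,n\rangle=0$, which by the uniqueness built into the paper's definition of the lightlike Gauss map identifies it. Your verification-by-uniqueness for $n$ is a sensible shortcut compared with solving for $n$ directly.

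One small looseness in your final paragraph deserves flagging. You are right that holomorphicity of $h^2\eta$ \emph{implies} holomorphicity of $h\eta$ and of $\eta h_z$ (the order estimate $2\operatorname{ord}(h)+\operatorname{ord}(\eta)\geq 0$ at a pole of order $k\geq 1$ gives $\operatorname{ord}(\eta)\geq 2k$, which dominates both $k$ and $k+1$). But the converse packaging is not automatic: your construction delivers $\eta$ and $h\eta=2\mathbf{l}_z$ holomorphic, and Codazzi delivers $\eta h_z$ holomorphic, i.e.\ $\operatorname{ord}(\eta)\geq k+1$; for $k\geq 2$ this does not force $\operatorname{ord}(\eta)\geq 2k$, so holomorphicity of $h^2\eta$ is a genuine hypothesis at higher-order branch points rather than a consequence of the other conditions. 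This does not affect the local statement for an immersion, where $|\eta|^2=e^{2\omega}>0$ means $\eta$ is nonvanishing, $h$ is holomorphic, and the $h^2\eta$ condition is vacuous --- which is the setting the Fact is actually invoked in throughout the paper.
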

Furthermore, the conjugate zero mean curvature surface of a given zero mean curvature surface can be defined as follows using Weierstrass data:
\begin{definition}{\cite[\S 89]{strubecker_differentialgeometrie_1942-1}}
    Let $X : \Sigma \to \mathbb{I}^3$ be a zero mean curvature surface represented by the Weierstrass data $(h, \eta)$.
    The zero mean curvature surface $\tilde{X} : \Sigma \to \mathbb{I}^3$ given by the Weierstrass data $(h, i\eta)$ is called the \emph{conjugate zero mean curvature surface} of $X$.
    The pair of surfaces $X$ and $\tilde{X}$ are referred to as \emph{conjugate zero mean curvature pair}.
\end{definition}
We make two important remarks regarding Weierstrass representations:
\begin{remark}\label{remark:wdata}
    The Weierstrass data of a zero mean curvature surface change as isometries are applied to the surface.
    Therefore, the Weierstrass data are not uniquely determined from the intrinsic data of a given zero mean curvature surface.
    However, if different Weierstrass data give the same intrinsic data of the zero mean curvature surfaces then the two surfaces are still congruent up to isometries of isotropic $3$-space.
\end{remark}
\begin{remark}\label{remark:conjugate}
    We note that the conformal factor $\omega$ and the Hopf differential $Q \dif{z}^2$ are given by the Weierstrass data.
    Thus, if a pair of zero mean curvature surfaces share the same conformal factor, while the Hopf differential differs by a factor of $i$, then we can deduce that the two zero mean curvature surfaces are a conjugate zero mean curvature pair, up to isometries of isotropic space.
\end{remark}

Under the current setting of normalizing $Q = -\frac{1}{2}$, we have that the holomorphic $1$-form in the Weierstrass representation must take the form
    \begin{equation}\label{eqn:etatoh}
        \eta = - \frac{1}{h_z} \dif{z}.
    \end{equation}
To recover the Weierstrass data from the conformal factor, we will use the following result \cite[\S 4]{shaw_recovering_2004}:
\begin{fact}\label{fact:recover}
    Let $\eta: \Sigma \to \mathbb{C}$ be a holomorphic function with some given modulus $|\eta|^2 = R^2$.
    Then the holomorphic function can be recovered via
        \[
            \eta(z) = \frac{1}{\overline{\eta(z_0)}}R^2\left(\frac{z+\bar{z_0}}{2},\frac{z-\bar{z_0}}{2 i}\right)
        \]
    for some $z_0\in \Sigma$.
\end{fact}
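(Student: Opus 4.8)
The statement is a polarization (analytic-continuation) result, so the plan is to display the right-hand side as the complexification of the real-analytic function $|\eta|^2$ and then evaluate it along a judiciously chosen complex slice. First I would record the factorization underlying everything. Writing $z = u + i v$ with $u, v$ real and expanding $\eta$ in its convergent power series $\eta(z) = \sum_n a_n z^n$, I introduce the companion holomorphic function $\eta^*(w) := \overline{\eta(\bar w)} = \sum_n \bar{a}_n w^n$. Then, for real $u, v$, one has
    \[
        R^2(u,v) = |\eta(u + i v)|^2 = \eta(u + i v)\,\overline{\eta(u + i v)} = \eta(u + i v)\,\eta^*(u - i v).
    \]

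Next I would treat the right-most expression as a function of two independent complex variables. Since $\eta$ and $\eta^*$ are holomorphic, the map $(u,v) \mapsto \eta(u + i v)\,\eta^*(u - i v)$ is holomorphic in each of the complex variables $u$ and $v$ separately (each factor being a composition of a holomorphic function with an affine map), hence jointly holomorphic on a polydisc neighborhood in $\mathbb{C}^2$ of every real point at which it is defined. Because this holomorphic function agrees with the real-analytic function $R^2(u,v)$ for all real $u, v$, the identity theorem forces the complexification of $R^2$—i.e.\ the function obtained by substituting complex arguments into the convergent power series of $R^2(u,v)$—to coincide with $\eta(u + i v)\,\eta^*(u - i v)$. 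This is the step that does the real work, and the only genuine subtlety is bookkeeping: confirming that the naive substitution of complex values into the two-variable real-analytic function $R^2$ is legitimate and returns exactly this holomorphic extension.

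Finally I would evaluate along the slice appearing in the statement. Setting $u = \frac{z + \bar{z_0}}{2}$ and $v = \frac{z - \bar{z_0}}{2 i}$, a direct computation gives $u + i v = z$ and $u - i v = \bar{z_0}$, so the factorization collapses to
    \[
        R^2\!\left(\frac{z + \bar{z_0}}{2}, \frac{z - \bar{z_0}}{2 i}\right) = \eta(z)\,\eta^*(\bar{z_0}) = \eta(z)\,\overline{\eta(z_0)},
    \]
using $\eta^*(\bar{z_0}) = \overline{\eta(z_0)}$. Choosing $z_0$ with $\eta(z_0) \neq 0$—possible since $\eta$ is not identically zero, as $R^2 \not\equiv 0$—and dividing by $\overline{\eta(z_0)}$ yields the claimed recovery formula on a neighborhood of $z_0$. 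I expect no essential obstacle beyond carefully phrasing the continuation principle in the second step; the first and third steps are the polarization identity and an elementary substitution.
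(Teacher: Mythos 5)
Your argument is correct, and it is essentially the standard proof: the paper itself offers no proof of this Fact, instead citing Shaw \cite[\S 4]{shaw_recovering_2004}, and the polarization/complexification argument you give---writing $R^2(u,v)=\eta(u+iv)\,\eta^*(u-iv)$ with $\eta^*(w):=\overline{\eta(\bar w)}$, extending holomorphically via the identity theorem, and evaluating at $u=\tfrac{z+\bar z_0}{2}$, $v=\tfrac{z-\bar z_0}{2i}$ so that $u+iv=z$ and $u-iv=\bar z_0$---is precisely the method of the cited source. Your handling of the two delicate points (justifying the complexified substitution by separate holomorphy plus agreement on the real slice, and choosing $z_0$ with $\eta(z_0)\neq 0$, which exists since $R^2\not\equiv 0$) is sound, so there is nothing to add.
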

\begin{remark}\label{remark:recover}
    The holomorphic function recovered from its modulus via Fact~\ref{fact:recover} is not unique, as one can multiply the holomorphic function by a unit length complex constant.
    This freedom can be observed in Fact~\ref{fact:recover} in the choice of $\eta(z_0)$, as 
        \[
            |\eta(z_0)|^2 = R^2 (\Re (z_0), \im(z_0)),
        \]
    and thus
        \[
            \eta(z_0) = e^{i \theta} R(\Re (z_0), \im(z_0))
        \]
    for some $\theta \in \mathbb{R}$.
    However, by Remark~\ref{remark:wdata}, we are free to choose any $\theta$ in recovering the Weierstrass data.
\end{remark}
Using Fact~\ref{fact:recover}, Remark~\ref{remark:recover}, and \eqref{eqn:etatoh}, we recover the Weierstrass data as follows.

\textbf{Case (1a):} Let $\alpha > 0$ and $\beta = 0$.
In this case, the metric is
    \[
        |\eta_{(\alpha, 0)}|^2 = \frac{1}{\alpha^2}e^{-2\alpha u} |{\dif{z}}|^2.
    \]
Therefore, we may take
    \[
        \eta_{(\alpha, 0)} = -\frac{1}{\alpha}e^{-\alpha z} \dif{z},
    \]
so that by \eqref{eqn:etatoh}, we have
    \[
        h_{(\alpha, 0)}=e^{\alpha z}.
    \]

\textbf{Case (1b):} If we have $\alpha = 0$ and $\beta > 0$, then the metric is given by
    \[
        |\eta_{(0, \beta)}|^2 = \frac{\beta^2}{4}(u^2 + v^2)^2|{\dif{z}}|^2,
    \]
so that we may take
    \[
        \eta_{(0,\beta)} = \frac{\beta z^2}{2} \dif{z},
    \]
and thus
    \[
        h_{(0,\beta)} = \frac{2}{\beta z}.
    \]

\textbf{Case (1c):} Now, let $\alpha > 0$ and $\beta > 0$, so that the metric becomes
    \[
        |\eta_{(\alpha, \beta)}|^2 = \frac{\beta^2}{\alpha^4}(\cosh \alpha u - \cos \alpha v)^2 |{\dif{z}}|^2.
    \]
Thus, we take
    \[
        \eta_{(\alpha,\beta)} = \frac{2\beta}{\alpha^2} \sinh^2 \frac{\alpha z}{2} \dif{z},
    \]
so that
    \[
        h_{(\alpha,\beta)} =\frac{\alpha}{\beta} \coth \frac{\alpha z}{2}.
    \]

\textbf{Case (2):} Finally, let $\omega \equiv -c$ for some constant $c \in \mathbb{R}$.
Then the metric is
    \[
        |\eta_c|^2 = e^{-2c}
    \]
so that we may take
    \[
        \eta_c = -e^{-c} \dif{z},
    \]
implying that
    \[
        h_c = e^{c} z.
    \]

Noting that the change of coordinates $\tilde{h}(z) := h (Cz)$ with $\tilde{\eta}$ given by \eqref{eqn:etatoh} corresponds to the surface changing by
    \[
        \tilde{X}(u,v) = \frac{1}{C^2}X(Cu, Cv),
    \]
so that it is a reparametrization of the surface up to some homothety factor, we summarize our classification of zero mean curvature surfaces given with their Weierstrass data (see also Figure~\ref{fig:examples}):
\begin{theorem}\label{thm:wdata}
    Let $X : \Sigma \to \mathbb{I}^3$ be a zero mean curvature immersion with planar curvature lines.
    Then $X$ must be a piece of one, and only one, of
        \begin{itemize}
            \item plane $(0, 1 \dif{z})$,
            \item trivial Enneper-type surface $(z, 1\dif{z})$,
            \item catenoid $(e^{z}, e^{-z} \dif{z})$,
            \item Enneper-type surface $(\frac{1}{z}, z^2 \dif{z})$, or
            \item one of Bonnet-type surfaces $(\frac{\alpha}{\beta} \coth \frac{\alpha z}{2}, \frac{2\beta}{\alpha^2} \sinh^2 \frac{\alpha z}{2} \dif{z})$,
        \end{itemize}
    given with the respective Weierstrass data, up to isometries and homotheties of the isotropic space.
\end{theorem}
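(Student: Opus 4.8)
The plan is to assemble Theorem~\ref{thm:wdata} as a synthesis of the analytic classification in Theorem~\ref{thm:analytic} together with the explicit Weierstrass-data recovery carried out case by case in the preceding subsection. First I would peel off the degenerate case: if $X$ is a (totally geodesic) plane, then its second fundamental form vanishes, the normalization $Q=-\tfrac{1}{2}$ is unavailable, and the defining relation \eqref{eqn:etatoh} breaks down; one checks directly that the plane is represented by $(0,1\,\dif z)$, and this surface must be appended by hand rather than emerging from the $\omega$-classification. For every non-planar $X$ the normalization $Q=-\tfrac{1}{2}$ applies, and Theorem~\ref{thm:analytic} forces the conformal factor $\omega$ into exactly one of Cases (1a), (1b), (1c), or (2).

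Next, for each of these four cases I would feed the metric $e^{2\omega}$ into Fact~\ref{fact:recover} to recover the holomorphic $\eta$ from its modulus, and then obtain $h$ from \eqref{eqn:etatoh}; this is exactly the computation already displayed in the subsection, yielding $(e^{\alpha z}, -\tfrac{1}{\alpha} e^{-\alpha z}\,\dif z)$ in Case (1a), $(\tfrac{2}{\beta z}, \tfrac{\beta z^2}{2}\,\dif z)$ in Case (1b), the two-parameter family $(\tfrac{\alpha}{\beta}\coth\tfrac{\alpha z}{2}, \tfrac{2\beta}{\alpha^2}\sinh^2\tfrac{\alpha z}{2}\,\dif z)$ in Case (1c), and $(e^{c} z, -e^{-c}\,\dif z)$ in Case (2). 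I would then invoke the reparametrization–homothety observation $\tilde X(u,v)=\tfrac{1}{C^2}X(Cu,Cv)$ stated just before the theorem to absorb the free constant $\alpha$ (in Cases (1a) and (1b)) and $c$ (in Case (2)), and use the unit-modulus freedom of Remark~\ref{remark:recover} to clear the residual phase in $\eta$ (in particular the stray signs). This identifies Case (1a) with the catenoid $(e^{z}, e^{-z}\,\dif z)$, Case (1b) with the Enneper-type surface $(\tfrac{1}{z}, z^2\,\dif z)$, Case (2) with the trivial Enneper-type surface $(z, 1\,\dif z)$, and Case (1c) with the Bonnet-type family; throughout, Remark~\ref{remark:wdata} ensures that agreement of the intrinsic data upgrades to congruence up to isometry and homothety.

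To finish the ``one, and only one'' assertion I would show that the five families are pairwise non-congruent. The plane is singled out by being totally geodesic (vanishing Hopf differential), while the remaining four are separated by the qualitative behavior of $\omega$ recorded in Theorem~\ref{thm:analytic}: the trivial Enneper-type surface has constant $\omega$, whereas the catenoid, the Enneper-type surface, and the Bonnet-type surfaces have $\omega$ that is, respectively, invariant under $v$-translation (a surface of revolution), non-periodic in both directions, and non-periodic in $u$ but periodic in $v$. Since $\omega$ together with $Q$ is an isometry invariant recoverable from the Weierstrass data, no surface can belong to two classes simultaneously.

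The step I expect to demand the most care is the bookkeeping of normalizations: tracking how the homothety rescaling $C$ and the phase $e^{i\theta}$ of Remark~\ref{remark:recover} act simultaneously on $h$ and, through \eqref{eqn:etatoh}, on $\eta$, so that each recovered pair is brought into precisely the canonical form listed without hidden sign or scale discrepancies. The only genuinely conceptual point—already handled above—is recognizing that the plane lies outside the $Q=-\tfrac{1}{2}$ normalization and therefore cannot be produced by the $\omega$-classification.
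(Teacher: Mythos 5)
Your proposal is correct and follows essentially the same route as the paper: the plane is appended separately as the totally umbilic case excluded by the $Q=-\tfrac{1}{2}$ normalization, the Weierstrass data are recovered case by case from $e^{2\omega}$ via Fact~\ref{fact:recover} and \eqref{eqn:etatoh}, and the residual constants are normalized away by the reparametrization $\tilde{X}(u,v)=\tfrac{1}{C^2}X(Cu,Cv)$ together with the phase freedom of Remark~\ref{remark:recover}, with your pairwise-distinctness argument via the qualitative behavior of $\omega$ merely making explicit what the paper leaves implicit in the ``one, and only one'' claim. (One cosmetic slip: in Case~(1b) the constant absorbed by the homothety is $\beta$, not $\alpha$.)
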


\begin{figure}
    \begin{minipage}{0.49\linewidth}
    	\centering
	\includegraphics[width=0.8\textwidth]{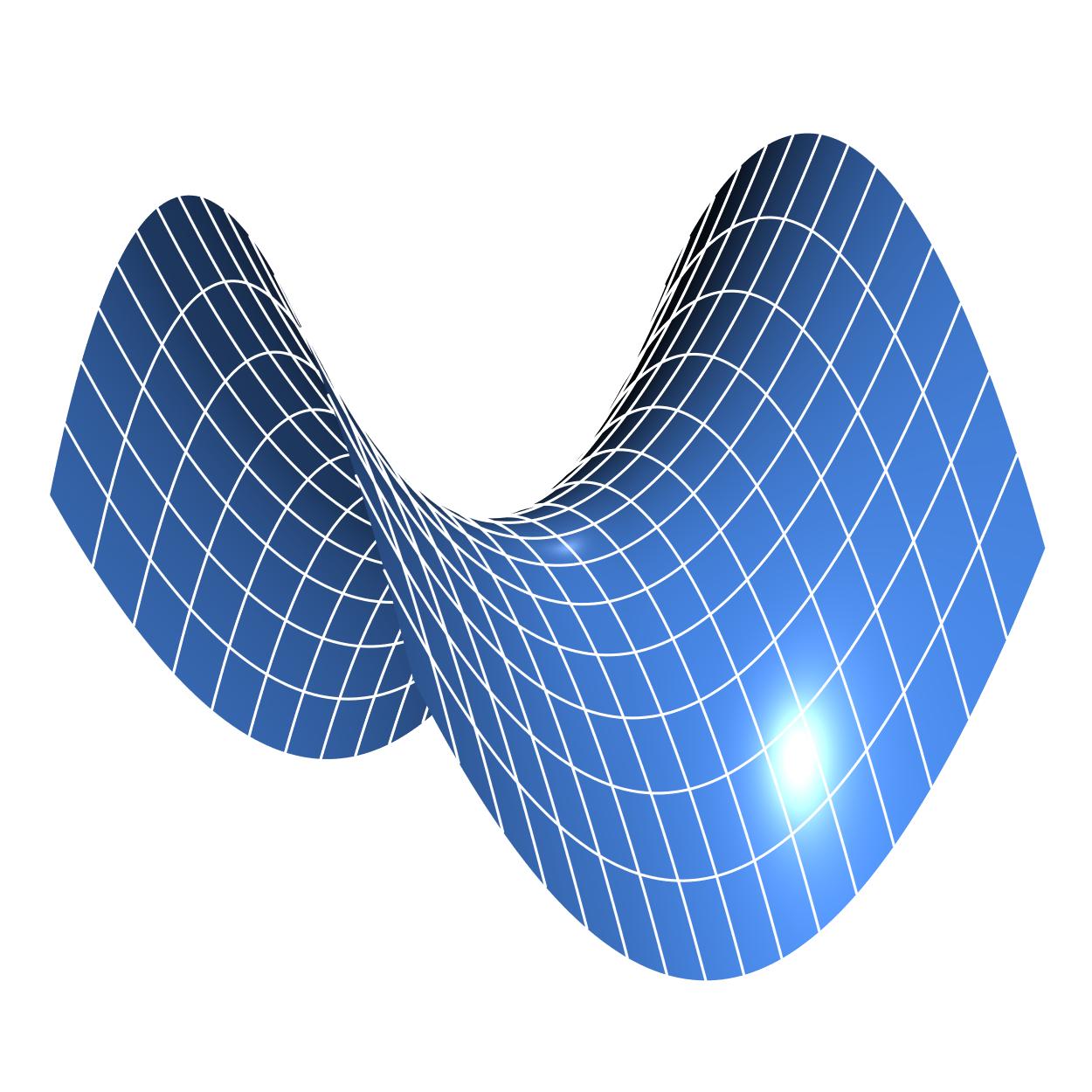}
    \end{minipage}
    \begin{minipage}{0.49\linewidth}
    	\centering
	\includegraphics[width=0.8\textwidth]{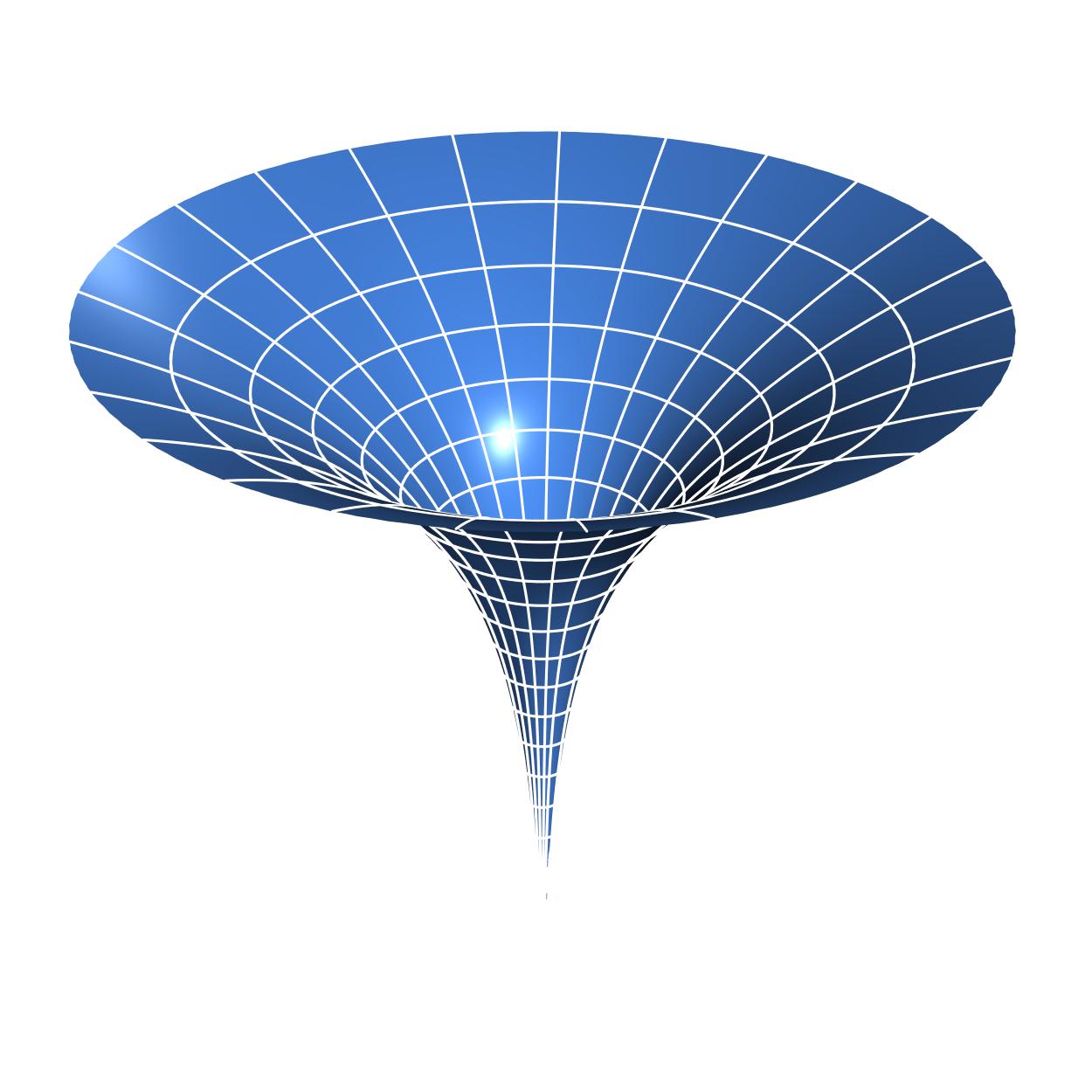}
    \end{minipage}
    
    \vspace{5pt}
    \begin{minipage}{0.49\linewidth}
    	\centering
	\includegraphics[width=0.8\textwidth]{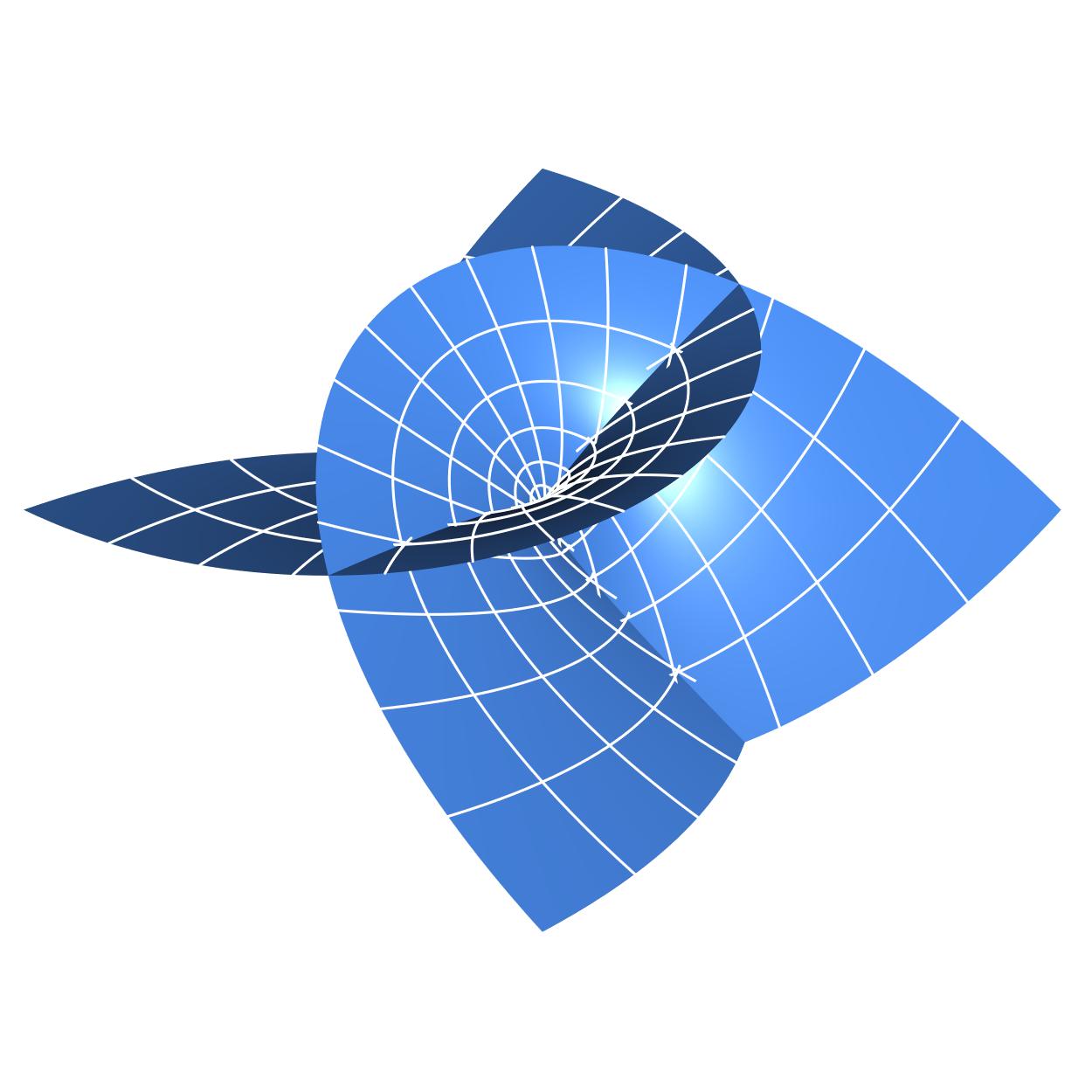}
    \end{minipage}
    \begin{minipage}{0.49\linewidth}
    	\centering
	\includegraphics[width=0.8\textwidth]{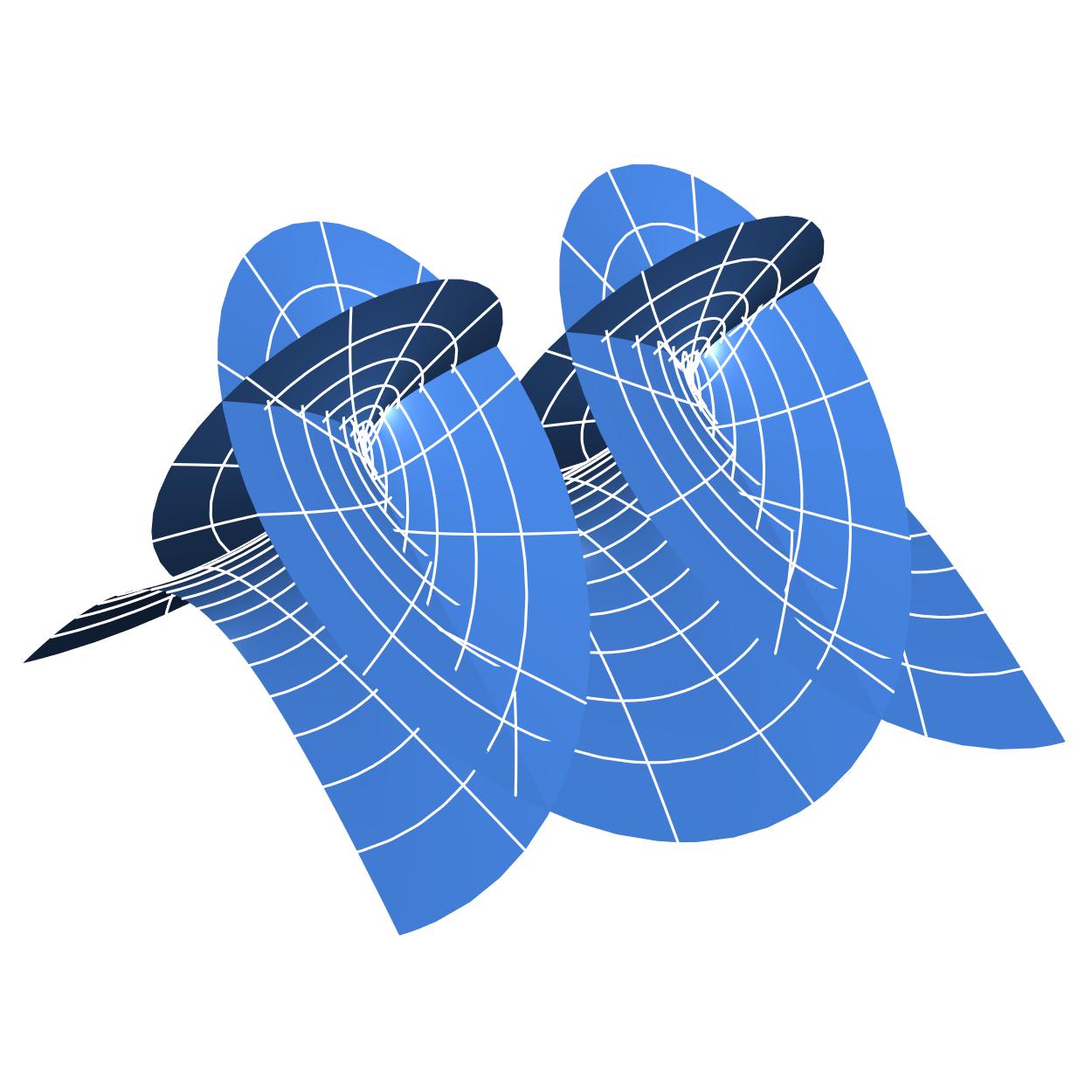}
    \end{minipage}

    \caption{Zero mean curvature surfaces with planar curvature lines in isotropic space: trivial Enneper surface and catenoid on the top row; Enneper surface and Bonnet-type surface on the bottom row.}
    \label{fig:examples}
\end{figure}

\section{Geometry of zero mean curvature surfaces with planar curvature lines}\label{sect:four}
Here, we note a few geometric facts about zero mean curvature surfaces with planar curvature lines.
\subsection{Trivial Enneper-type surface}
The zero mean curvature surface pertaining to the trivial solution to the integrability condition, that is, $\omega \equiv c$, has been studied in the context of higher order Enneper-type surfaces in isotropic $3$-space in \cite[\S 9]{strubecker_uber_1975}, and appears as the trivial example of such surfaces.
Assuming without loss of generality that $c = 0$, the parametrization of the surface reads
    \[
        X_0 (u,v) = -\left(\tfrac{1}{2}(u^2 - v^2), u, v \right)^t
    \]
so that every curvature line of the surface is in the shape of a parabola in Euclidean geometry.
These are the \emph{parabolic circles} \cite[\S 11]{strubecker_differentialgeometrie_1941} of isotropic $3$-space.
Thus, every curvature line of the trivial Enneper surface is a circle in isotropic $3$-space, so that the surface is a doubly-channel surface.

In particular, the parametrization also shows that all the (parabolic) circles in a family of curvature lines belong to planes that are parallel to each other.
This alludes to the fact that the surface may be invariant under certain actions of isometries of isotropic $3$-space.

Isometries of isotropic $3$-space that fix the origin are inherited from the isometries of $\mathbb{R}^{3,1}$ via
    \[
        \mathrm{SO}(2,0,1) := \{ A \in \mathrm{SO}(3,1) : A \mathfrak{p} = \mathfrak{p}\}
    \]
acting on isotropic $3$-space.
To view parabolic circles as orbits under action of isometries, let us consider the subgroup $P_\mathbf{v} < \mathrm{SO}(2,0,1)$ given by
    \[
        P_\mathbf{v} := \{A \in \mathrm{SO}(2,0,1) : A \mathbf{v} = \mathbf{v}, \langle \mathbf{v}, \mathbf{v} \rangle = 1\}.
    \]
Normalizing so that $\mathbf{v} = (0,1,0,0)^t =: e_1$, the subgroup $P_\mathbf{v}$ can be parametrized via
    \[
        v \mapsto \begin{pmatrix}
            1 + \tfrac{v^2}{2} & 0 & v & -\tfrac{v^2}{2} \\
            0 & 1 & 0 & 0 \\
            -v & 0 & -1 & v \\
            \tfrac{v^2}{2} & 0 & v & 1-\tfrac{v^2}{2}
        \end{pmatrix} =: P_{e_1}(v).
    \]
    
Now consider the following (well-defined) action on $\mathbb{I}^3$:
    \[
        X \mapsto P_{e_1,r}(v) X := P_{e_1}(v)(X + r \tilde{\mathfrak{p}}) - r \tilde{\mathfrak{p}}.
    \]
This action acts as an isometry of $\mathbb{I}^3$; furthermore, applying this action to the origin $O = (0,0,0,0)^t$, we see that
    \[
        P_{e_1,r}(v)(O) = (-\tfrac{r}{2} v^2, 0, r v, -\tfrac{r}{2}v^2)^t,
    \]
that is, parabolic circles are obtained as orbits under the action of $P_{\mathbf{v},r}$.
We refer to such actions as \emph{parabolic rotations}.

Defining $e_2 := (0,0,1,0)^t$, we can now similarly check that $P_{e_2}$ can be parametrized via
    \[
        u \mapsto \begin{pmatrix}
            1 + \tfrac{u^2}{2} & -u & 0 & -\tfrac{u^2}{2} \\
            u & -1 & 0 & -u \\
            0 & 0 & 1 & 0 \\
            \tfrac{u^2}{2} & -u & 0 & 1-\tfrac{u^2}{2}
        \end{pmatrix} =: P_{e_2}(u),
    \]
and calculate that
    \[
        P_{e_2,1}(u)\circ P_{e_1,-1}(v) (O) = -\left(\tfrac{1}{2}(u^2 - v^2), u, v, \tfrac{1}{2}(u^2 - v^2)\right)^t.
    \]
Therefore, we not only observe that the trivial Enneper-type surface is invariant under parabolic rotations, but also conclude that the surface is generated by applying two of such rotations to a single point.

\subsection{Axial directions}
As in the case of zero mean curvature surfaces in Euclidean space and Minkowski space \cite{akamine_analysis_2020, cho_deformation_2017, cho_deformation_2018}, we show that zero mean curvature surfaces with planar curvature lines in isotropic $3$-space also have \emph{axial directions} following the techniques from \cite{walter_explicit_1987}.
Axial direction is the unique direction contained in the planes of a family of planar curvature lines, that is, there is some constant $w_1, w_2 \in \mathbb{I}^3$ such that
    \[
        w_1 \in \Span\{X_u, X_{uu}\} \quad\text{and}\quad w_2 \in \Span\{X_v, X_{vv}\}
    \]
for all $(u,v)\in \Sigma$.
However, unlike the cases of zero mean curvature surfaces in Euclidean space or Minkowski space, the difficulty of finding this direction lies in the fact that there is no cross product structure in isotropic $3$-space.

To obtain a characterization of a direction $w \in \mathbb{I}^3$ being contained in a plane $P \subset \mathbb{I}^3$, recall from \cite[\S 2.1.1]{cho_spinor_2024} that planes in $\mathbb{I}^3$ are given by $m \in \mathbb{R}^{3,1}$ with $\langle m, m \rangle = 0$ and $\langle m, \mathfrak{p} \rangle = 1$ via
    \[
        P_{m,q} = \{x \in \mathbb{I}^3 : \langle x, m \rangle = q \}.
    \]
When viewing the plane as a surface, $m$ is the constant lightlike Gauss map of $P_{m,q}$.
Thus, if $w \in P_{m,q}$, then we must have that
    \[
        \langle m, w \rangle = 0.
    \]

Denoting by $P_1(v)$ the plane containing the $u$-curvature line at $v$, that is,
    \[
        P_1(v) = \Span \{X_u(u,v), X_{uu}(u,v)\},
    \]
we first find the (unnormalized) lightlike Gauss map $m_1(v)$ of $P_1(v)$ by requiring that
    \[
        \langle m_1,m_1 \rangle = 0, \quad \langle m_1,X_u \rangle = 0,  \quad  \langle m_1,X_{uu} \rangle = 0,  \quad  \langle m_1,\mathfrak{p} \rangle = \omega_v,
    \]
where we have assumed $\omega_v \not\equiv 0$.
Therefore, we can find
    \[
        m_1 = -e^{-2\omega} X_v + \omega_v n - \frac{e^{-2\omega}}{2\omega_v} \mathfrak{p}.
    \]
Similarly, denoting by $P_2(u)$ the plane containing the $v$-curvature line at $u$, we can calculate the (unnormalized) lightlike Gauss map $m_2(u)$ of $P_2(u)$ as
    \[
        m_2 = e^{-2\omega} X_u + \omega_u n - \frac{e^{-2\omega}}{2\omega_u}\mathfrak{p}.
    \]
Then we have
    \begin{align*}
        m_{1,v} &= \omega_u e^{-2\omega} X_u + \omega_{vv} n + \frac{\omega_{vv}e^{-2\omega}}{2 \omega_v^2} \mathfrak{p}, \\
        m_{2,u} &= -\omega_v e^{-2\omega} X_v + \omega_{uu} n + \frac{\omega_{uu}e^{-2\omega}}{2 \omega_u^2} \mathfrak{p}.
    \end{align*}
    
Now if we define $w_1, w_2 \in \mathbb{I}^3$ via
    \begin{align*}
        w_1 &:= \omega_{uu} X_u - \omega_{uv} X_v +\omega_u \mathfrak{p}\\
        w_2 &:= \omega_{uv} X_u - \omega_{vv} X_v +\omega_v \mathfrak{p},
    \end{align*}
then we can show
    \[
        \langle m_1, w_1 \rangle = \omega_{uv} + \omega_v\omega_u = 0
        \quad\text{and}\quad
        \langle m_2, w_2 \rangle = \omega_{uv} + \omega_v\omega_u = 0,
    \]
while
    \[
        \langle m_{1,v}, w_1 \rangle = \omega_{uu} \omega_u + \omega_{vv}\omega_u = 0
        \quad\text{and}\quad
        \langle m_{2,u}, w_2 \rangle = \omega_v \omega_{vv} + \omega_v\omega_{uu} = 0.
    \]
Finally, using \eqref{eqn:pde} and their partial derivatives, we can show that
	\begin{align*}
		w_{1,u} &= (\omega_{uuu} + \omega_{uu}\omega_u - \omega_{uv}\omega_v) X_u
                    - (\omega_{uuv} + \omega_{uu}\omega_v + \omega_{uv}\omega_u) X_v = 0,\\
		w_{1,v} &= (\omega_{uuv} + \omega_{uu}\omega_v + \omega_{uv}\omega_u) X_u
                    - (\omega_{uvv} - \omega_{uu}\omega_u + \omega_{uv}\omega_v)X_v = 0, \\
        w_{2,u} &= (\omega_{uuv} + \omega_{uv}\omega_u - \omega_{vv}\omega_v) X_u
                    - (\omega_{uvv} + \omega_{uv}\omega_v + \omega_{vv}\omega_u)X_v = 0, \\
        w_{2,v} &= (\omega_{uvv} + \omega_{uv}\omega_v + \omega_{vv}\omega_u) X_u
                    - (\omega_{vvv} - \omega_{uv}\omega_u + \omega_{vv}\omega_v)X_v = 0.
	\end{align*}
Therefore, we have shown that $w_1$ and $w_2$ are constant vectors contained in the planes defined by $m_1$ and $m_2$, respectively, and we summarize as follows:
\begin{proposition}
	If $f(u)$ (resp. $g(v)$) is not identically equal to zero, then there is a unique constant direction $w_1$ (resp. $w_2$) contained in all the planes of $u$-curvature lines (resp. $v$-curvature lines), given by
	\[
		\begin{cases}
			w_1 = \omega_{uu} X_u - \omega_{uv} X_v +\omega_u \mathfrak{p} \\
            w_2 = \omega_{uv} X_u - \omega_{vv} X_v +\omega_v \mathfrak{p}.
		\end{cases}
	\]
	Furthermore, if $w_1$ and $w_2$ both exist, then $w_1$ is orthogonal to $w_2$.
    We call $w_1$ and $w_2$ the \emph{axial directions} of the  surface.
\end{proposition}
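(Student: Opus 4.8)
The plan is to establish three separate assertions for $w_1$ (the argument for $w_2$ being identical after interchanging the roles of $u$ and $v$): that $w_1$ is a \emph{constant} vector, that it is \emph{nonzero} and lies in the direction space of every plane $P_1(v)$ of $u$-curvature lines, and finally that $w_1 \perp w_2$ whenever both exist. Rather than route the containment through the normal $m_1$, I would verify it directly. Substituting the Gauss--Weingarten relation $X_{uu} = \omega_u X_u - \omega_v X_v - \mathfrak{p}$ from \eqref{eqn:GW} to eliminate $\mathfrak{p}$ from the definition of $w_1$, and then applying the planarity condition \eqref{eqn:pde2} to cancel the resulting $X_v$-term, one rewrites
\[
    w_1 = (\omega_{uu} + \omega_u^2)\,X_u - \omega_u\,X_{uu},
\]
which exhibits $w_1$ as an element of $\Span\{X_u, X_{uu}\}$ at every point. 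Since the $u$-curvature line through $(u,v)$ is planar, this span is independent of $u$ for each fixed $v$, so $w_1$ lies in the direction space of $P_1(v)$ for every $v$.

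Next I would confirm that $w_1$ is constant by differentiating. Expanding $w_{1,u}$ and $w_{1,v}$ with \eqref{eqn:GW} and feeding in the compatibility condition \eqref{eqn:pde1}, the planarity condition \eqref{eqn:pde2}, and their first derivatives, every coefficient of $X_u$ and of $X_v$ collapses to zero; this is exactly the bookkeeping recorded immediately before the statement. Constancy is what promotes the pointwise membership just obtained into genuine containment in each fixed plane. For the nonvanishing, recall from \eqref{eqn:f} that $f = \omega_u e^{\omega}$, so the hypothesis $f \not\equiv 0$ forces $\omega_u \neq 0$ at some point; there the coefficient $-\omega_u$ of $X_{uu}$ in the displayed formula is nonzero, hence $w_1 \neq 0$ there, and by constancy $w_1 \neq 0$ everywhere.

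For uniqueness I would work with the two-dimensional direction spaces $V_1(v) := \Span\{X_u, X_{uu}\}$ inside the three-dimensional $\mathbb{I}^3$. Because $\omega_u \not\equiv 0$, the computed derivative $m_{1,v}$ carries a nonzero $X_u$-component, so $m_1(v)$ is non-constant and the planes genuinely rotate; choosing $v_1 \neq v_2$ with $V_1(v_1) \neq V_1(v_2)$, two distinct planes through the origin in a three-dimensional space meet in a line, and any constant direction common to all the $P_1(v)$ must lie in that line. Hence the common direction is unique up to scale, and equals $w_1$. Finally, orthogonality is a one-line computation: using the conformality relations $\langle X_u, X_u\rangle = \langle X_v, X_v\rangle = e^{2\omega}$ and $\langle X_u, X_v\rangle = 0$ together with $\langle X_u, \mathfrak{p}\rangle = \langle X_v, \mathfrak{p}\rangle = \langle \mathfrak{p}, \mathfrak{p}\rangle = 0$ (the latter since $X \in \mathbb{I}^3$ and $\mathfrak{p} \in \mathcal{L}$), all $\mathfrak{p}$-terms drop out and
\[
    \langle w_1, w_2\rangle = e^{2\omega}\,\omega_{uv}\,(\omega_{uu} + \omega_{vv}) = 0
\]
by the compatibility condition \eqref{eqn:pde1}.

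The step I expect to be the main obstacle is the uniqueness claim, since isotropic $3$-space lacks a cross-product structure and one cannot simply single out the common direction as a distinguished normal. The resolution is to argue that the one-parameter family of planes is non-degenerate---equivalently, that $m_1$ genuinely varies---which is precisely where the hypothesis $f \not\equiv 0$ does essential work. By contrast, the existence and constancy reduce to the calculation already displayed, and the orthogonality follows immediately from the harmonicity of $\omega$.
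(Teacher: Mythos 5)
Your proposal is correct, and it diverges from the paper in two identifiable ways. For the containment, the paper never rewrites $w_1$ in the frame $\{X_u, X_{uu}\}$: it computes the unnormalized lightlike Gauss map $m_1 = -e^{-2\omega}X_v + \omega_v n - \frac{e^{-2\omega}}{2\omega_v}\mathfrak{p}$ of $P_1(v)$ and verifies $\langle m_1, w_1\rangle = \omega_{uv} + \omega_u\omega_v = 0$ together with $\langle m_{1,v}, w_1\rangle = 0$, whereas you eliminate $\mathfrak{p}$ via \eqref{eqn:GW} and cancel the $X_v$-term with \eqref{eqn:pde2} to get $w_1 = (\omega_{uu}+\omega_u^2)X_u - \omega_u X_{uu} \in \Span\{X_u, X_{uu}\}$ pointwise; combined with the constancy computation (which you take over verbatim from the paper), this gives the containment without invoking $m_1$ at all, and it makes nonvanishing immediate, since the $X_{uu}$-coefficient $-\omega_u$ is nonzero wherever $f = \omega_u e^{\omega}$ is. Your route is more elementary; the paper's buys the explicit normals $m_1, m_2$, which it reuses later anyway. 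More substantively, you actually prove the uniqueness claim, which the paper asserts in the proposition but never argues: your observation that $m_{1,v}$ carries the $X_u$-component $\omega_u e^{-2\omega}$ while neither $m_1$ nor $\mathfrak{p}$ has one shows the direction spaces $V_1(v)$ genuinely vary (not merely that the unnormalized normal rescales), and two distinct $2$-planes in the three-dimensional direction space of $\mathbb{I}^3$ meet in a line. The orthogonality computation is identical to the paper's one-line proof.

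One caveat, inherited from the paper (which explicitly assumes $\omega_v \not\equiv 0$ when constructing $m_1$): your uniqueness mechanism presupposes that $m_1$ exists, and this fails exactly when $g \equiv 0$ — Case (1a), the catenoid, which satisfies the hypothesis $f \not\equiv 0$. There $\omega_v \equiv 0$, harmonicity forces $\omega_{uu} = 0$, so $\omega_u$ is a nonzero constant $c$ and $w_1 = c\,\mathfrak{p}$; the planes of $u$-curvature lines are vertical, contain the direction $\mathfrak{p}$, and are not of the form $P_{m,q}$, so they admit no lightlike Gauss map and the rotation argument via $m_{1,v}$ is vacuous. Uniqueness survives by your own distinct-planes principle: from \eqref{eqn:GW} one has $X_{uv} = cX_v \neq 0$ with $X_v$ horizontally nonzero and orthogonal to $X_u$, so the direction spaces $\Span\{X_u, \mathfrak{p}\}$ rotate with $v$ and distinct ones intersect exactly in $\Span\{\mathfrak{p}\} = \Span\{w_1\}$. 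Adding this short case analysis would make your uniqueness argument complete for every case allowed by the hypothesis $f \not\equiv 0$.
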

\begin{proof}
    We only need to verify that
        \[
            0 = \langle w_1, w_2 \rangle = \omega_{uv}e^{2\omega} (\omega_{uu} + \omega_{vv})
        \]
    which vanishes by \eqref{eqn:pde1}.
\end{proof}

To see the causality of the axial directions, using \eqref{eqn:fandg} tells us
	\begin{align*}
		\langle w_1,w_1 \rangle &= e^{2\omega} \omega_{uv}^2 + e^{2\omega} \omega_{uu}^2
		      = f^2 g^2 e^{-2\omega} + (f_u - f^2 e^{-\omega})^2\\
		  &= f^2 ((f^2+g^2) e^{-2\omega} - 2 f_u e^{-\omega}) + f_u^2 = -f^2\frac{f_u^2-g_v^2}{f^2+g^2}+f_u^2 \\
            &= -f^2e^{-\omega}(f_u- g_v)+ \alpha^2 f^2 + \beta^2 = \beta^2,
	\end{align*}
where we have used the fact that
    \[
        e^\omega = \frac{f^2 + g^2}{f_u + g_v}
            = \frac{(f^2 + g^2)(f_u - g_v)}{f_u^2 - g_v^2}
            = \frac{f_u - g_v}{\alpha^2}. 
    \]
coming from \eqref{eqn:ode}.
Similarly, one can calculate that
    \[
        \langle w_2, w_2 \rangle = \beta^2.
    \]
    
\section{Deformation of zero mean curvature surfaces with planar curvature lines}\label{sect:five}
We will now show that the class of zero mean curvature surfaces with planar curvature lines can be described via a continuous deformation, mirroring the result of zero mean curvature surfaces in Euclidean space and Minkowski space \cite{akamine_analysis_2020, cho_deformation_2017, cho_deformation_2018}.
In this paper, we will consider a deformation to be continuous with respect to a parameter, if the surface converges uniformly component-wise with respect to the parameter on compact subdomains.
To consider this, we will show that the deformation exists on the level of Weierstrass data for these surfaces.

Note that with the Weierstrass data given in Theorem~\ref{thm:wdata}, we have a deformation connecting Bonnet-type surfaces to the Enneper-type surface while keeping the planar curvature line condition, as
    \[
        \lim_{\alpha \searrow 0} h_{(\alpha ,\beta)} =  \lim_{\alpha \searrow 0} \frac{\alpha}{\beta} \coth \frac{\alpha z}{2} = \frac{2}{\beta z} = h_{(0,\beta)}.
    \]
    \[
        \lim_{\alpha \searrow 0} \eta_{(\alpha ,\beta)}
            = \lim_{\alpha \searrow 0} \frac{2\beta}{\alpha^2} \sinh^2 \frac{\alpha z}{2} \dif{z}
            = \frac{\beta z^2}{2} \dif{z}
            = \eta_{(0,\beta)}.
    \]
Thus, we only need to show that there exists a continuous deformation keeping the planarity of curvature lines to the trivial Enneper surface and the catenoid.

\subsection{Deformation to the trivial Enneper surface}
To obtain a deformation to the trivial Enneper surface, let us reconsider the case $\alpha>0$ and $\beta>0$, and take solutions to \eqref{eqn:ode} as
    \[\begin{cases}
        f = \frac{\beta}{\alpha} \sinh \alpha u\\
        g = -\frac{\beta}{\alpha} \sin \alpha v.
    \end{cases}\]
Taking the path $\beta = \frac{\alpha^2}{2}$ (see Figure~\ref{fig:path}), we then have that
    \[
        e^{2\omega} = \frac{1}{4}(\cosh \alpha u + \cos \alpha v)^2.
    \]
Thus we may recover the holomorphic $1$-form of the Weierstrass data to be
    \[
        \eta_\alpha = - \cosh^2 \frac{\alpha z}{2} \dif{z}
    \]
so that by \eqref{eqn:etatoh},
    \[
        h_\alpha = \frac{2}{\alpha} \tanh \frac{\alpha z}{2}.
    \]
With these Weierstrass data, we observe that
    \[
        \lim_{\alpha \searrow 0} h_\alpha = z = h_0 \quad\text{and}\quad \lim_{\alpha \searrow 0} \eta_\alpha = - \dif{z} = \eta_0,
    \]
corresponding to the Weierstrass data of the trivial Enneper surface.
\begin{figure}
    \includegraphics{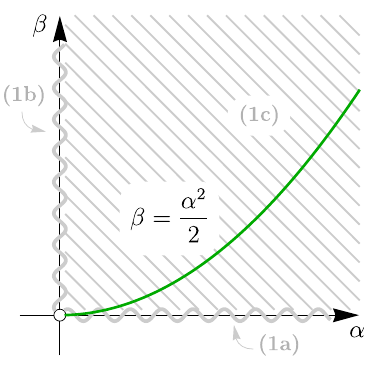}
    \caption{Path on classification diagram taken to find a deformation to the trivial Enneper surface.}
    \label{fig:path}
\end{figure}

In fact, one can obtain explicit parametrization $X_\alpha$ using the Weierstrass data $(h_\alpha, \eta_\alpha)$ and choosing the correct initial condition as
    \[
        X_\alpha = -\tfrac{1}{2\alpha^2}\begin{pmatrix}
            2(\cosh{\alpha u} \cos{\alpha v} - 1)\\
            \alpha(\sinh{\alpha u} \cos{\alpha v} + \alpha u)\\
            \alpha(\cosh{\alpha u} \sin{\alpha v} + \alpha v)
        \end{pmatrix},
    \]
and check that
    \[
        \lim_{\alpha \searrow 0} X_\alpha =
            -\begin{pmatrix}
                \tfrac{1}{2}(u^2 - v^2)\\
                u\\
                v
            \end{pmatrix}.
    \]
\subsection{Deformation to the catenoid}
The Weierstrass data given Theorem~\ref{thm:wdata} alone are not sufficient to obtain a deformation to the catenoid.
This is due to the fact that we have normalized the initial conditions of $f$ and $g$ to have zeroes, which excludes the case of catenoids as the solutions to \eqref{eqn:ode} corresponding to the case of catenoids, i.e.\ $f = e^{-\alpha u}$, have no zeroes.

To find a suitable alternate initial condition, we observe that in every case of Theorem~\ref{thm:analytic}, $g$ has a zero, while there is always some $u_0$ such that $f(u_0) = 1$.
Therefore, in considering the problem of continuous deformation, we will revisit the problem and solve for $f$ using \eqref{eqn:ode1} and \eqref{eqn:ode3} assuming that $f(0) = 1$.

However note that in Theorem~\ref{thm:analytic}, $f$ corresponding to case (1a) already satisfies the new initial condition, while $f$ in case (1b) can be easily modified to be $f = -\beta u + 1$ with $g = -\beta v$.
So let us consider case (1c), where $\alpha > 0$ and $\beta > 0$.
Then using the general solution \eqref{eqn:gen1}, we have 
    \[
        1 = f(0) = C_1 + C_2
    \]
so that \eqref{eqn:cicond} implies
    \[
        C_1 = \frac{\alpha - \sqrt{\alpha^2 + \beta^2}}{2\alpha}.
    \]
Therefore, we have
    \[
        f = \cosh \alpha u - \frac{\sqrt{\alpha^2 + \beta^2}}{\alpha} \sinh \alpha u.
    \]
Summarizing:
\begin{lemma}
    Explicit solutions to \eqref{eqn:ode} with $f(0) = 1$ and $g(0) = 0$ are given by
    \begin{description}
        \item[Case (1a)] For $\alpha > 0$ and $\beta = 0$, the solutions are given by
            \[\begin{cases}
                f_{(\alpha,0)} = e^{-\alpha u}\\
                g_{(\alpha,0)} = 0.
                \end{cases}\]
        \item[Case (1b)] For $\alpha = 0$ and $\beta > 0$, we have
            \[\begin{cases}
                f_{(0,\beta)} = -\beta u + 1\\
                g_{(0,\beta)} = -\beta v.
            \end{cases}\]
        \item[Case (1c)] For $\alpha > 0$ and $\beta > 0$, we calculate that
            \[\begin{cases}
                f_{(\alpha,\beta)} = \cosh \alpha u - \frac{\sqrt{\alpha^2 + \beta^2}}{\alpha} \sinh \alpha u \\
                g_{(\alpha,\beta)} = -\frac{\beta}{\alpha} \sin \alpha v.
            \end{cases}\]
    \end{description}
    With these solutions, we have
        \begin{gather*}
            \lim_{\alpha \to 0} f_{(\alpha,\beta)} = f_{(0,\beta)}, \quad \lim_{\alpha \to 0} g_{(\alpha,\beta)} = g_{(0,\beta)}\\
            \lim_{\beta \to 0} f_{(\alpha,\beta)} = f_{(\alpha,0)}, \quad \lim_{\beta \to 0} g_{(\alpha,\beta)} = g_{(\alpha,0)}.
        \end{gather*}
\end{lemma}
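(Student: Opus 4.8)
The plan is to solve the system \eqref{eqn:ode} case by case, reusing the general solutions \eqref{eqn:gen} together with the constraint \eqref{eqn:cicond} that are already in hand, but now imposing the normalization $f(0)=1$, $g(0)=0$ in place of the earlier $f(0)=g(0)=0$. Since only the initial data change, the general solutions and the relation \eqref{eqn:cicond} carry over verbatim, and the whole argument reduces to pinning down constants and then checking four elementary limits.

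For Case (1a) ($\alpha>0$, $\beta=0$) the solution $f=e^{-\alpha u}$, $g=0$ from Theorem~\ref{thm:analytic} already has $f(0)=1$ and $g(0)=0$, so nothing need be done. For Case (1b) ($\alpha=0$, $\beta>0$) the system degenerates to $f_{uu}=0$, $f_u^2=\beta^2$ and $g_{vv}=0$, $g_v^2=\beta^2$, whose solutions are affine; imposing $f(0)=1$, $g(0)=0$ and selecting the sign forced by continuity with Case (1c) (see below) yields $f=-\beta u+1$ and $g=-\beta v$.

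The substance is Case (1c) ($\alpha>0$, $\beta>0$). First I would write $f = C_1 e^{\alpha u} + C_2 e^{-\alpha u}$ as in \eqref{eqn:gen1}; the condition $f(0)=1$ gives $C_1+C_2=1$, while \eqref{eqn:cicond} gives $C_1 C_2 = -\beta^2/(4\alpha^2)$, so $C_1,C_2$ are the roots of $t^2-t-\beta^2/(4\alpha^2)=0$. Choosing the root $C_1 = (\alpha-\sqrt{\alpha^2+\beta^2})/(2\alpha)$ and $C_2=1-C_1$ and then collecting the exponentials into hyperbolic functions gives $f_{(\alpha,\beta)} = \cosh\alpha u - \frac{\sqrt{\alpha^2+\beta^2}}{\alpha}\sinh\alpha u$. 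For $g$ I would write $g = C_3 e^{i\alpha v}+C_4 e^{-i\alpha v}$ with $C_3=\bar{C}_4$; then $g(0)=0$ forces $C_3+C_4=0$, i.e.\ $C_3$ is purely imaginary, and \eqref{eqn:cicond} fixes $|C_3|=\beta/(2\alpha)$, so that $g_{(\alpha,\beta)} = -\frac{\beta}{\alpha}\sin\alpha v$ after the sign normalization.

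Finally I would verify the four limits using only $\frac{\sinh\alpha u}{\alpha}\to u$ and $\frac{\sin\alpha v}{\alpha}\to v$ as $\alpha\to 0$, together with $\sqrt{\alpha^2+\beta^2}\to\beta$ (as $\alpha\to0$) and $\sqrt{\alpha^2+\beta^2}\to\alpha$ (as $\beta\to0$, using $\alpha>0$); in the latter limit the identity $\cosh\alpha u - \sinh\alpha u = e^{-\alpha u}$ recovers $f_{(\alpha,0)}$. There is no genuine obstacle: the computation is routine, and the only care needed is in tracking the sign choices in Cases (1a) and (1b) — these are legitimate because the coordinate changes $u\mapsto-u$ and $v\mapsto v+\pi/\alpha$ discussed before Theorem~\ref{thm:analytic} let us fix signs freely, and the specific signs are forced by requiring the limits $\lim_{\alpha\to0}f_{(\alpha,\beta)}=f_{(0,\beta)}$ and $\lim_{\beta\to0}f_{(\alpha,\beta)}=f_{(\alpha,0)}$ to hold — and in resolving the $0/0$ forms in the $\alpha\to0$ limits.
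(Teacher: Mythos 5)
Your proposal is correct and takes essentially the same route as the paper: the paper likewise disposes of Case (1a) by inspection, modifies Case (1b) to $f=-\beta u+1$, $g=-\beta v$, and handles Case (1c) by imposing $f(0)=1$ on the general solution \eqref{eqn:gen1} together with \eqref{eqn:cicond} to get $C_1=\frac{\alpha-\sqrt{\alpha^2+\beta^2}}{2\alpha}$, hence $f=\cosh\alpha u-\frac{\sqrt{\alpha^2+\beta^2}}{\alpha}\sinh\alpha u$, with the four limits checked by the same elementary expansions. Your framing of $C_1,C_2$ as roots of $t^2-t-\beta^2/(4\alpha^2)=0$ and your explicit justification of the sign choices via continuity are harmless refinements of the paper's argument, not a different method.
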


To obtain Weierstrass data for the case $\alpha >0$ and $\beta >0$, note that
    \[
        |\tilde{\eta}_{(\alpha,\beta)}|^2 = e^{2\omega} = \left(\tfrac{1}{\alpha^2}( \alpha \sinh \alpha u - \sqrt{\alpha^2 + \beta^2} \cosh \alpha u + \beta\cos \alpha v)\right)^2.
    \]
Using Fact~\ref{fact:recover} and Remark~\ref{remark:recover}, we take
    \[
        \tilde{\eta}_{(\alpha,\beta)} = \tfrac{\alpha - i \beta}{\alpha^2(\alpha + i \beta)}(\alpha \sinh \alpha z - \sqrt{\alpha^2 + \beta^2} \cosh \alpha z + \beta) \dif{z}.
    \]
Thus, taking the correct constant of integration, we have
    \[
        \tilde{h}_{(\alpha,\beta)} = \frac{\alpha(\alpha + i \beta)(\beta e^{\alpha z} + \sqrt{\alpha^2 + \beta^2} - \alpha)}{\beta (\alpha - i \beta) (\alpha - \beta \sinh \alpha z)}.
    \]
Let us define constants $r \in (0, \infty)$ and $\theta \in (0, \tfrac{\pi}{2})$ so that $\alpha = r \cos \theta$ and $\beta = r \sin \theta$.
Then we have
    \[
        \tilde{h}_{(r,\theta)} = \frac{2e^{2i \theta}\cos{\theta}}{e^{-r \cos{\theta}\, z}(\cos{\theta}+1) - \sin{\theta}}
    \]
and
    \[
        \tilde{\eta}_{(r,\theta)} = \frac{e^{-2i\theta}(\cos\theta \sinh{(r \cos \theta \, z)} - \cosh{(r \cos \theta \, z)} + \sin \theta)}{r \cos^2 \theta} \dif{z}.
    \]

For these Weierstrass data, we can verify that
    \[
        \lim_{\theta \searrow 0}  \tilde{h}_{(r,\theta)} = e^{r z} \quad\text{and}\quad
        \lim_{\theta \searrow 0}  \tilde{\eta}_{(r,\theta)} = -\frac{1}{r}e^{-r z}
    \]
while
    \[
        \lim_{\theta \nearrow \frac{\pi}{2}}  \tilde{h}_{(r,\theta)} = \frac{2}{rz - 1}
        \quad\text{and}\quad
        \lim_{\theta \nearrow \frac{\pi}{2}}  \tilde{\eta}_{(r,\theta)} = \frac{(r z - 1)^2}{2r}.
    \]
Therefore, the Weierstrass data $(\tilde{h}_{(r,\theta)},  \tilde{\eta}_{(r,\theta)} \dif{z})$ provides a continuous deformation connecting catenoid, Bonnet-type surfaces, and Enneper-type surfaces, allowing us to conclude as in the next theorem (see also Figure~\ref{fig:deformation}).
\begin{theorem}\label{thm:deformation}
    There exists a continuous deformation consisting exactly of the zero mean curvature surfaces with planar curvature lines in isotropic $3$-space.
\end{theorem}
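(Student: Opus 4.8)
The plan is to package the explicit Weierstrass-data families already constructed in this section into a single connected continuous deformation, and then to check that, as a set of surfaces up to congruence, this deformation coincides with the classification list of Theorem~\ref{thm:wdata}. Concretely, I would take the family $(\tilde{h}_{(r,\theta)}, \tilde{\eta}_{(r,\theta)})$ with $r \in (0,\infty)$ fixed as a homothety scale and $\theta$ ranging over the closed interval $[0, \tfrac{\pi}{2}]$, where the endpoints are defined by the limits computed above: $\theta = 0$ yields the catenoid, each $\theta \in (0, \tfrac{\pi}{2})$ yields a Bonnet-type surface, and $\theta = \tfrac{\pi}{2}$ yields the Enneper-type surface. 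Adjoining the deformation built in the previous subsection (the path $\beta = \tfrac{\alpha^2}{2}$, with data $(h_\alpha, \eta_\alpha)$ limiting to the trivial Enneper surface as $\alpha \searrow 0$), which also passes through Bonnet-type surfaces, produces a connected family containing the catenoid, every Bonnet-type surface, the Enneper-type surface, and the trivial Enneper surface.

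First I would establish continuity in the sense fixed at the start of the section, namely uniform component-wise convergence of the surfaces on compact subdomains. Since each $\tilde{h}_{(r,\theta)}$ and $\tilde{\eta}_{(r,\theta)}$ is given by an explicit expression that is jointly real-analytic in $(\theta, z)$ away from the poles of the meromorphic factor, the pointwise limits recorded above in fact hold uniformly on every compact subset of the common domain. Feeding this into the representation $X = \Re \int (h, 1, -i)\,\eta$ and differentiating under the integral sign, uniform convergence of the integrands on compacta yields uniform convergence of the surfaces on compacta, once the constants of integration are fixed consistently across the family; the explicit parametrization $X_\alpha$ displayed above for the trivial-Enneper path illustrates exactly this bookkeeping.

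Next I would verify that the deformation consists \emph{exactly} of the desired surfaces. That every member has planar curvature lines is automatic by construction: each $\tilde{\eta}_{(r,\theta)}$ was recovered (via Fact~\ref{fact:recover} together with \eqref{eqn:etatoh}) from a conformal factor $e^{2\omega}$ solving the planarity system \eqref{eqn:pde}, so the associated surface is a zero mean curvature surface with planar curvature lines. Conversely, Theorem~\ref{thm:wdata} shows that any such surface is congruent, up to isometry and homothety, to a plane, the trivial Enneper surface, the catenoid, the Enneper-type surface, or a Bonnet-type surface; all of the latter four occur in the deformation above, and the plane is the residual degenerate member. Combining the two inclusions gives the claimed equality of families.

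The hard part will be the continuity check at the degenerate parameters rather than the enumeration. The subtlety is twofold: the limiting Weierstrass data develop poles of the meromorphic factor (for instance $h = \tfrac{2}{rz-1}$ at the Enneper end), so the uniform convergence must be asserted on compact subdomains avoiding these singular loci and on which all nearby members are simultaneously defined; and the integration constants must be selected so that the \emph{surfaces}, not merely their Weierstrass data, converge, which requires tracking a common base point through the limits. Once these two points are handled—by restricting to an exhaustion by compact subdomains and normalizing the initial condition as in the explicit $X_\alpha$ computation—the remaining verifications reduce to the routine limit evaluations already displayed.
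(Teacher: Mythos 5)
Your proposal is correct and takes essentially the same route as the paper: it assembles the deformation from the same two explicit Weierstrass-data families constructed in Section~\ref{sect:five} (the path $\beta = \alpha^2/2$ limiting to the trivial Enneper surface, and the polar family $(\tilde{h}_{(r,\theta)}, \tilde{\eta}_{(r,\theta)})$ joining the catenoid through the Bonnet-type surfaces to the Enneper-type surface), glues them along the common Bonnet-type members, and invokes Theorem~\ref{thm:wdata} for the ``exactly'' direction. Your extra care about uniform convergence on compact subdomains avoiding the poles and about fixing integration constants only makes explicit what the paper leaves implicit in its limit computations and the displayed parametrization $X_\alpha$.
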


\begin{figure}
    \includegraphics[width=0.8\linewidth]{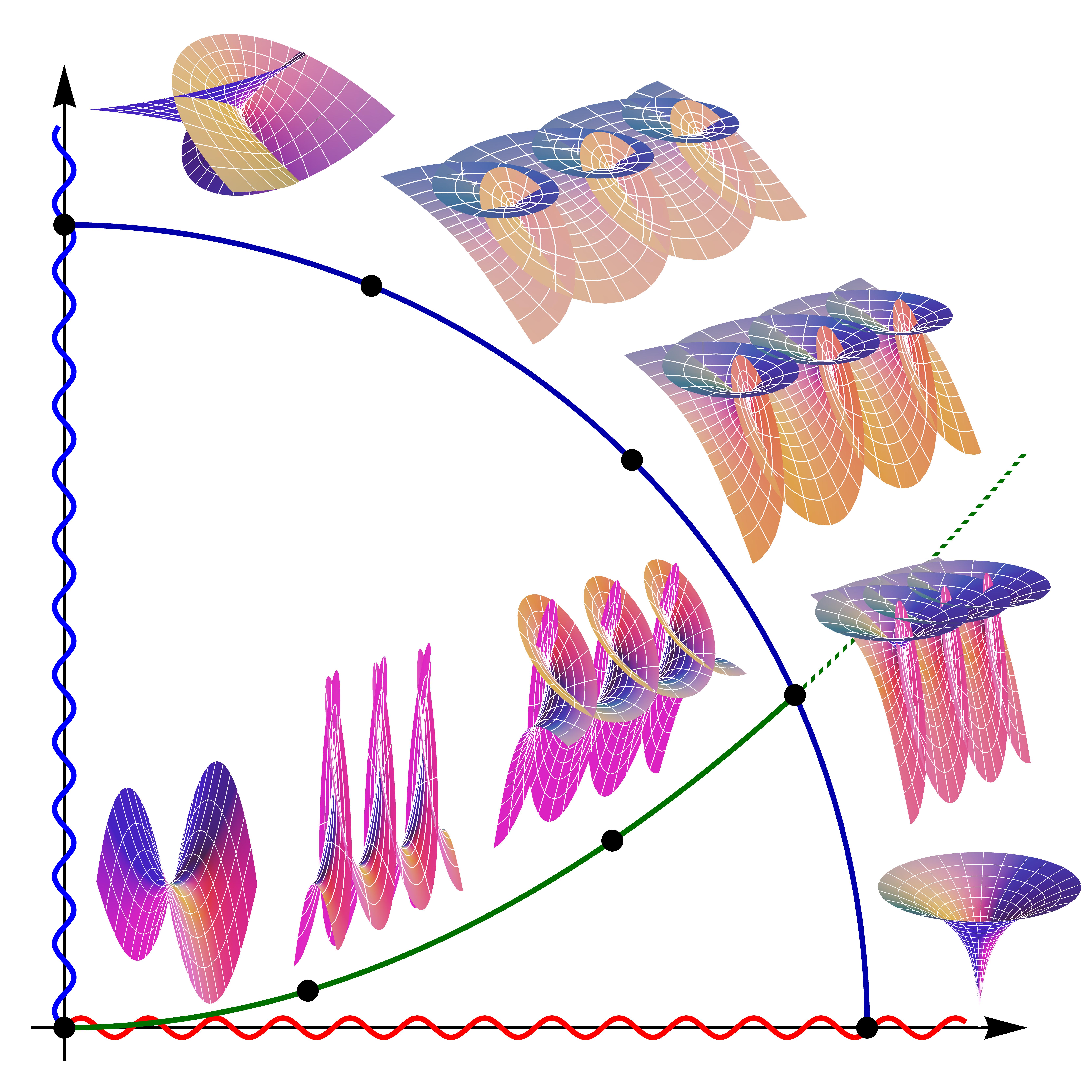}
    \caption{Continuous deformation of zero mean curvature surfaces keeping planar curvature lines}
    \label{fig:deformation}
\end{figure}

\section{Zero mean curvature surfaces that are also affine minimal}\label{sect:six}
Zero mean curvature surfaces in Euclidean space and Minkowski space with planar curvature lines enjoy a certain relationship to those zero mean curvature surfaces that are also affine minimal \cite{thomsen_uber_1923, manhart_bonnet-thomsen_2015, akamine_analysis_2020}.
In this section, investigate their relationship to zero mean curvature surfaces with planar curvature lines and recover the result by Strubecker \cite[Satz~9]{strubecker_uber_1977}.
Using this result. we will obtain a classification of all zero mean curvature surfaces in isotropic $3$-space that are also affine minimal, and conclude that they also constitute a $1$-parameter family of surfaces.

Let $X : \Sigma \to \mathbb{I}^3$ be a non-planar zero mean curvature immersion in isotropic $3$-space viewed as an affine surface.
If we define
    \begin{align*}
        \tilde{L} &:= \det(X_u, X_v, X_{uu})\\
        \tilde{M} &:= \det(X_u, X_v, X_{uv})\\
        \tilde{N} &:= \det(X_u, X_v, X_{vv}),
    \end{align*}
then for non-degenerate surfaces such that $\tilde{L}\tilde{N}-\tilde{M}^2$ does not vanish on $\Sigma$, the Berwald--Blaschke metric of the affine surface is given by
    \[
        \dif{\tilde{s}}^2 = \frac{1}{|\tilde{L}\tilde{N}-\tilde{M}^2|^{1/4}}(\tilde{L} \dif{u}^2 + 2\tilde{M}\dif{u}\dif{v} + \tilde{N} \dif{v}^2).
    \]
In particular, since zero mean curvature implies that the Gaussian curvature is negative, we may assume without loss of generality that $(u,v) \in \Sigma$ are asymptotic coordinates, i.e., we normalize the Hopf differential so that $Q = -\frac{i}{2}$.
Then the Gauss-Weingarten equations read
    \begin{equation}\label{eqn:GW2}
        \begin{cases}
        X_{uu} = \omega_u X_u - \omega_v X_v\\
        X_{uv} = \omega_v X_u + \omega_u X_v + \mathfrak{p}\\
        X_{vv} = -\omega_u X_u + \omega_v X_v\\
        n_u = - e^{-2\omega} X_v\\
        n_v = - e^{-2\omega} X_u
        \end{cases}
    \end{equation}
with the compatibility condition
    \[
        \omega_{uu} + \omega_{vv} = 0.
    \]

Under these assumptions, we have $\tilde{L} = \tilde{N} = 0$.
To find $\tilde{M}$, we first split $\mathbb{I}^3 = \mathbb{R}^2 \oplus \langle \mathfrak{p} \rangle$, and write $X = x + \chi \mathfrak{p}$ for some $x : \Sigma \to \mathbb{R}^2$ and $\chi : \Sigma \to \mathbb{R}$.
Then, we calculate
    \[
        \tilde{M} = \det(X_u, X_v, \omega_v X_u + \omega_u X_v + \mathfrak{p}) = \det(X_u, X_v, \mathfrak{p}) = \det(x_u, x_v, \mathfrak{p}),
    \]
and thus represents the area of the parallelogram spanned by $x_u$ and $x_v$ in $\mathbb{R}^2$.
By conformality \eqref{eqn:conformal}, $x_u$ and $x_v$ span a square with side length $e^{\omega}$, and thus the metric becomes
    \[
        \dif{\tilde{s}}^2 = 2\sqrt{\tilde{M}}\dif{u}\dif{v} = 2e^\omega\dif{u}\dif{v}.
    \]
The affine normal vector field $\tilde{n}$ is then given by
    \[
        \tilde{n} := e^{-\omega} X_{uv},
    \]
from which the affine shape operator $\tilde{S}$ of $X$ follows via
    \[
        \dif{\tilde{n}} = -\dif{X} \circ \tilde{S}.
    \]
Then the definitions of affine mean curvature $\tilde{H}$ and affine Gaussian curvature $\tilde{K}$ can be given as
    \[
        \tilde{H} = \frac{1}{2}\tr{\tilde{S}}, \quad\text{and}\quad \tilde{K} = \det \tilde{S}.
    \]

\begin{figure}
    \begin{minipage}{0.49\linewidth}
    	\centering
	\includegraphics[width=0.8\textwidth]{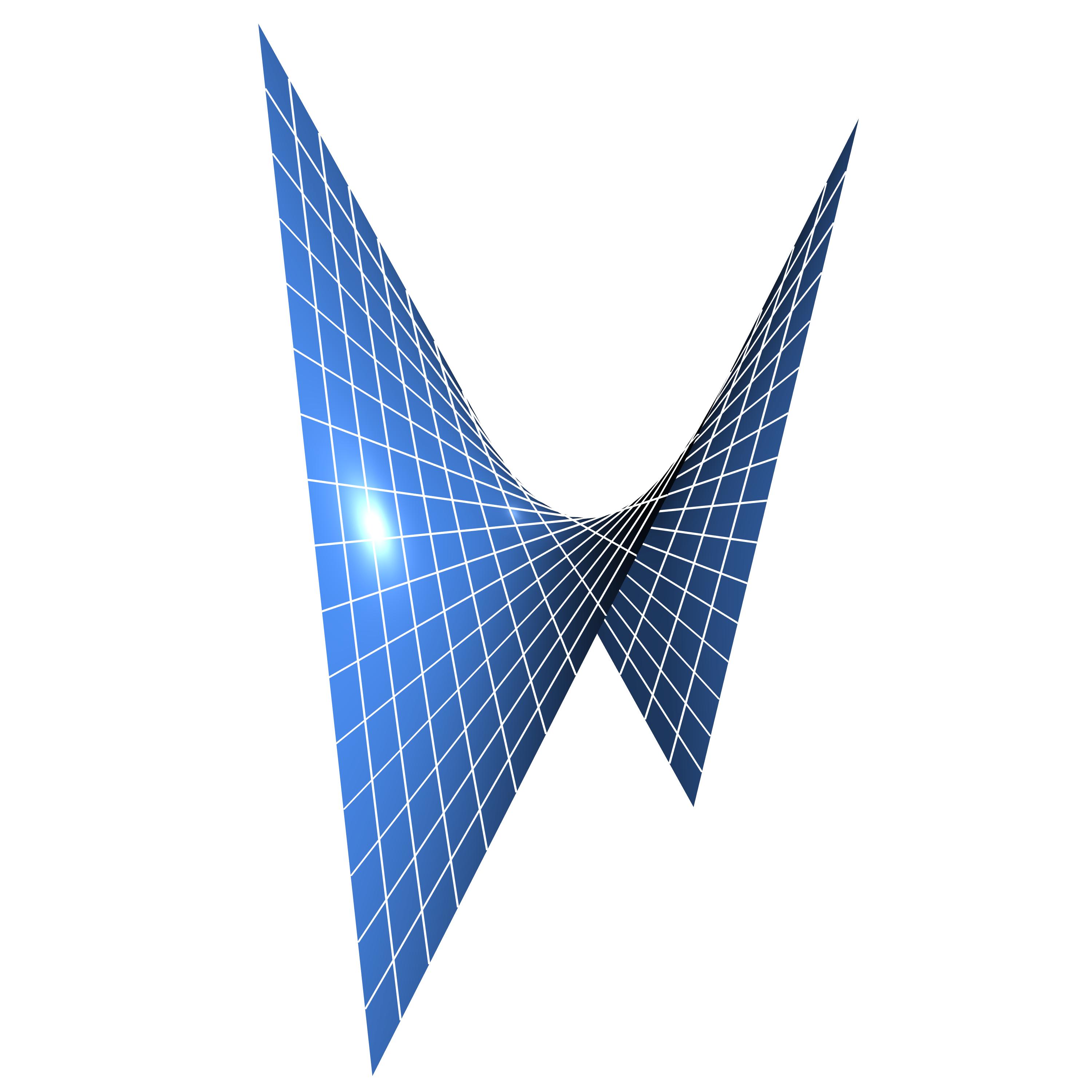}
    \end{minipage}
    \begin{minipage}{0.49\linewidth}
    	\centering
	\includegraphics[width=0.6\textwidth]{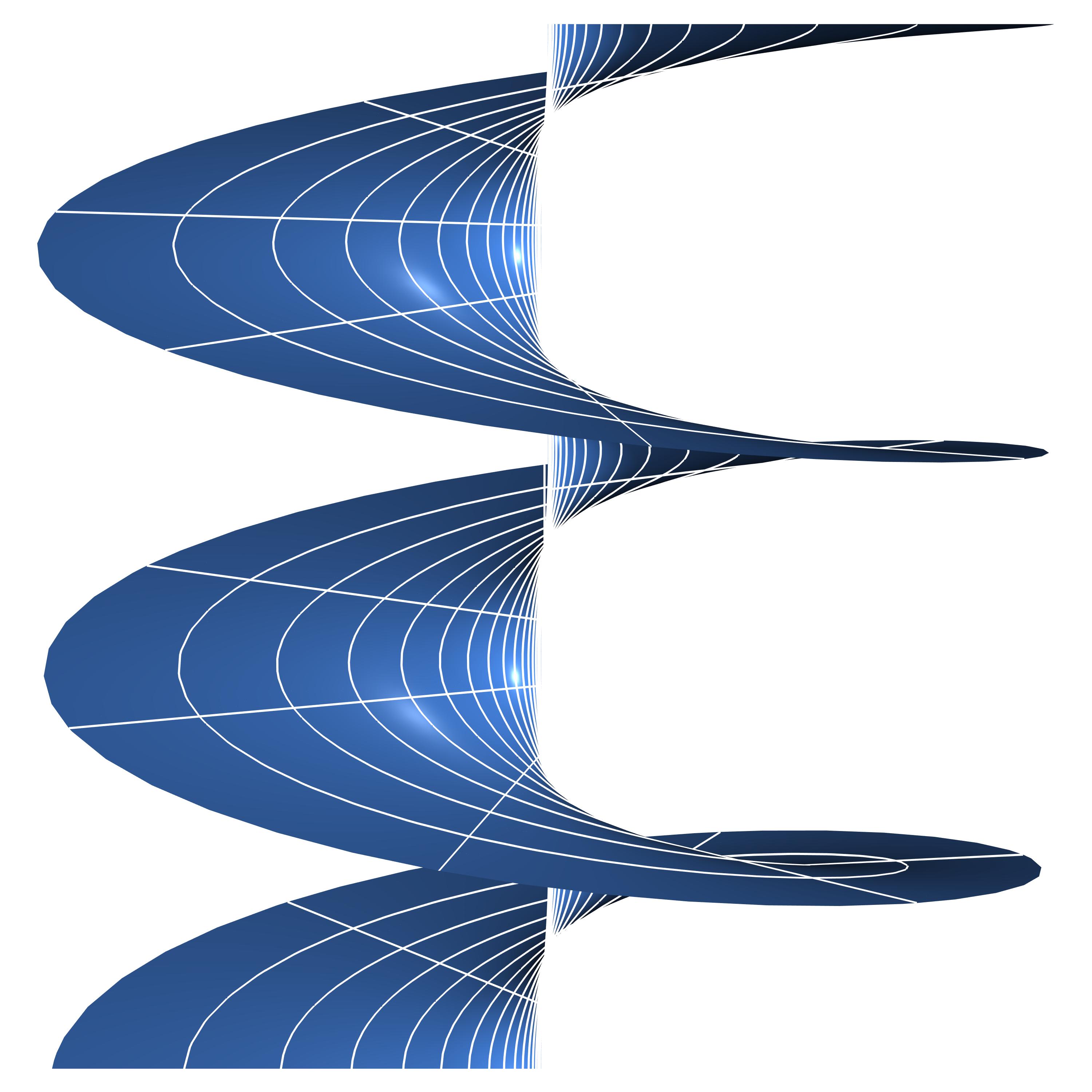}
    \end{minipage}
    
    \vspace{5pt}
    \begin{minipage}{0.49\linewidth}
    	\centering
	\includegraphics[width=0.8\textwidth]{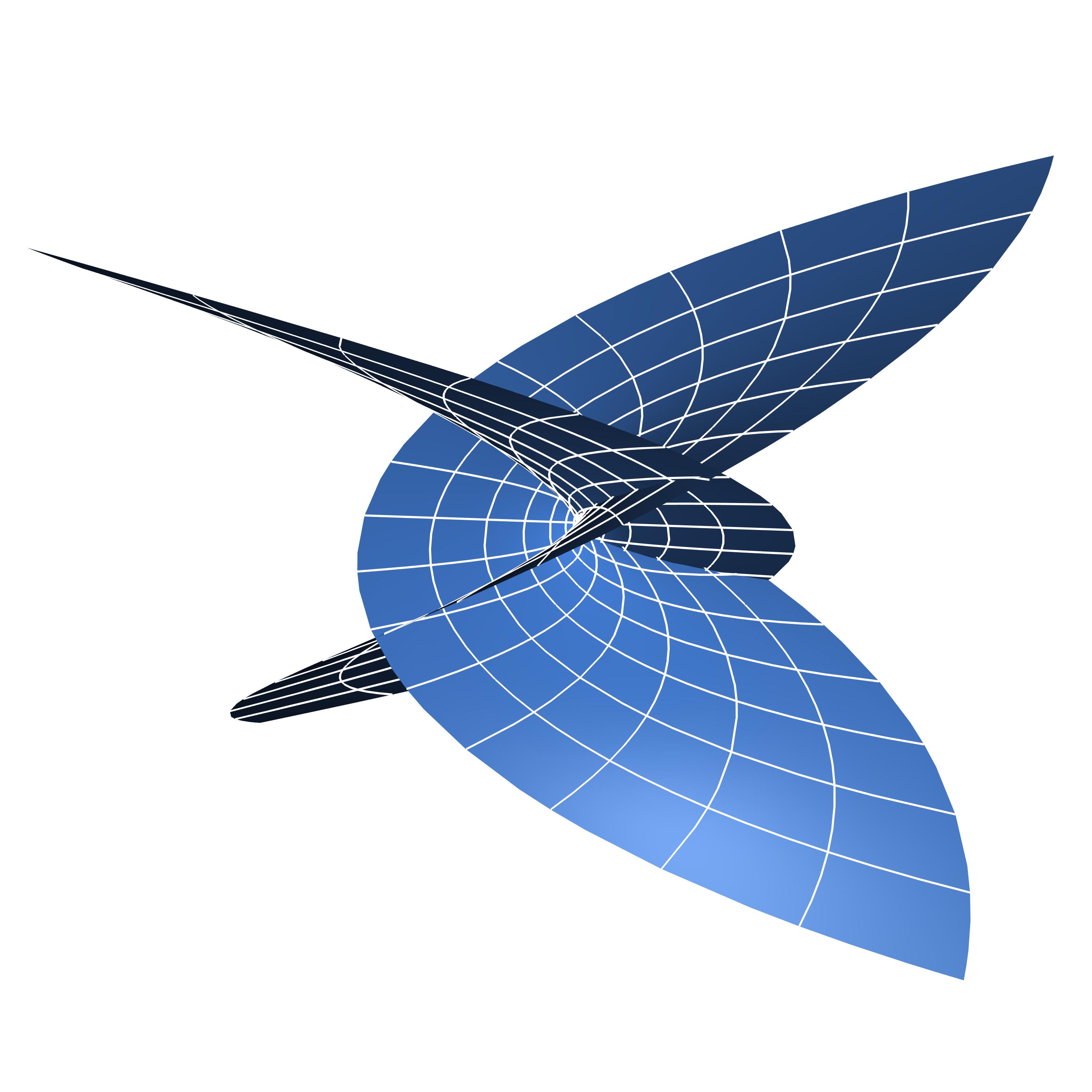}
    \end{minipage}
    \begin{minipage}{0.49\linewidth}
    	\centering
	\includegraphics[width=0.8\textwidth]{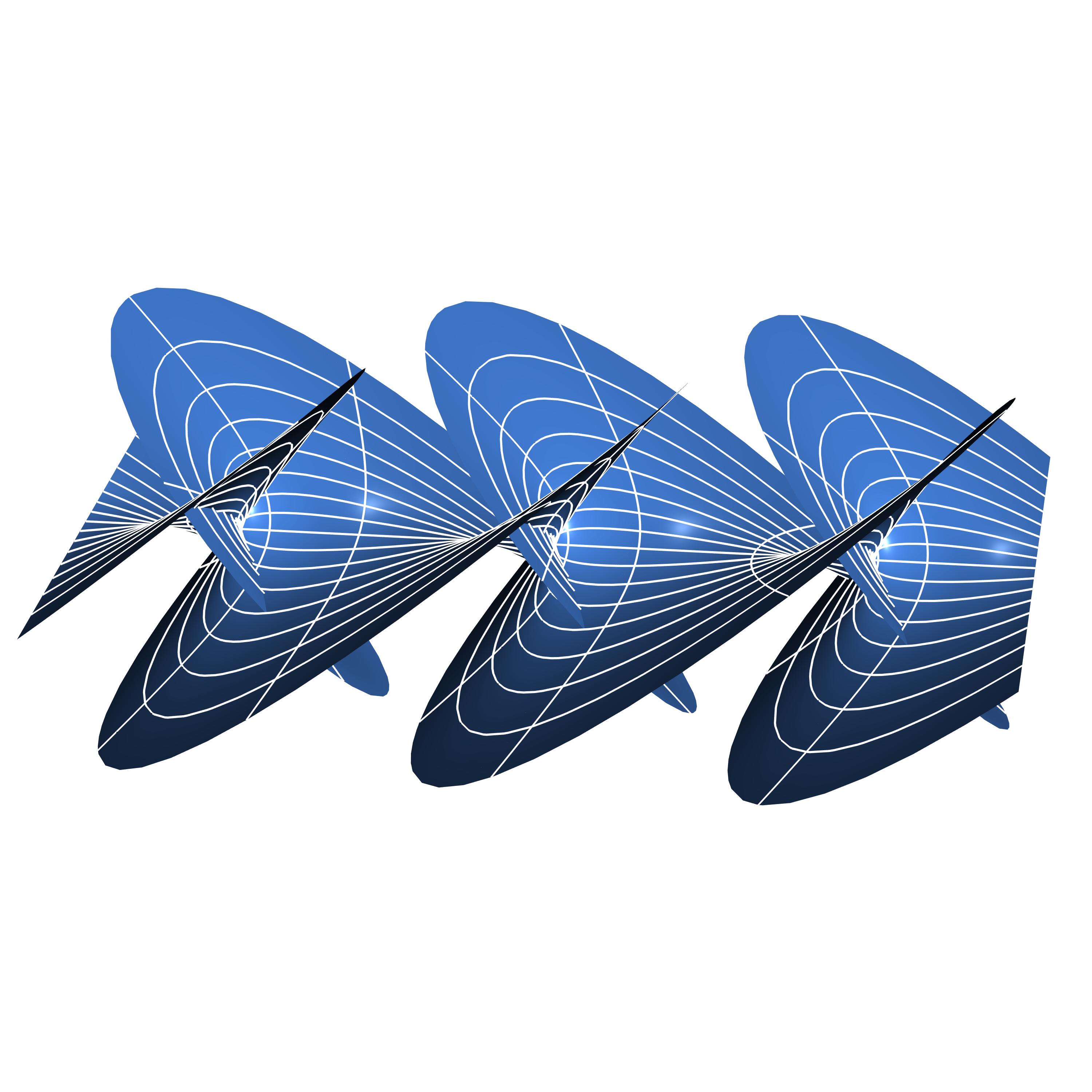}
    \end{minipage}

    \caption{Zero mean curvature surfaces that are also affine minimal in isotropic space: trivial Enneper surface and helicoid on the top row; Enneper surface and Thomsen-type surface on the bottom row.}
    \label{fig:examples2}
\end{figure}
In particular, we can verify directly that
    \begin{align*}
        \tilde{n}_u &= e^{-\omega} \big((\omega_{uv} + \omega_u \omega_v) X_u + (\omega_{uu} - \omega_v^2 ) X_v\big) \\
        \tilde{n}_v &= e^{-\omega}\big( (\omega_{vv} - \omega_u^2) X_u + (\omega_{uv} + \omega_u\omega_v) X_v \big),
    \end{align*}
and thus
    \[
        \tilde{H} = -e^{-\omega}(\omega_{uv} + \omega_u \omega_v).
    \]
Therefore, every zero mean curvature surface in isotropic space that is also affine minimal corresponds to the solution to the following system of partial differential equations:
    \begin{subnumcases}{\label{eqn:affpde}}
        \omega_{uu} + \omega_{vv} = 0, &\text{(compatibility condition)} \label{eqn:affpde1}\\
        \omega_{uv} + \omega_u \omega_v = 0, &\text{(zero mean curvature and affine minimality condition)} \label{eqn:affpde2}
    \end{subnumcases}
which is identical to the system of partial differential equations that corresponds to zero mean curvature surfaces with planar curvature lines \eqref{eqn:pde}.

\begin{figure}
    \includegraphics[width=0.8\linewidth]{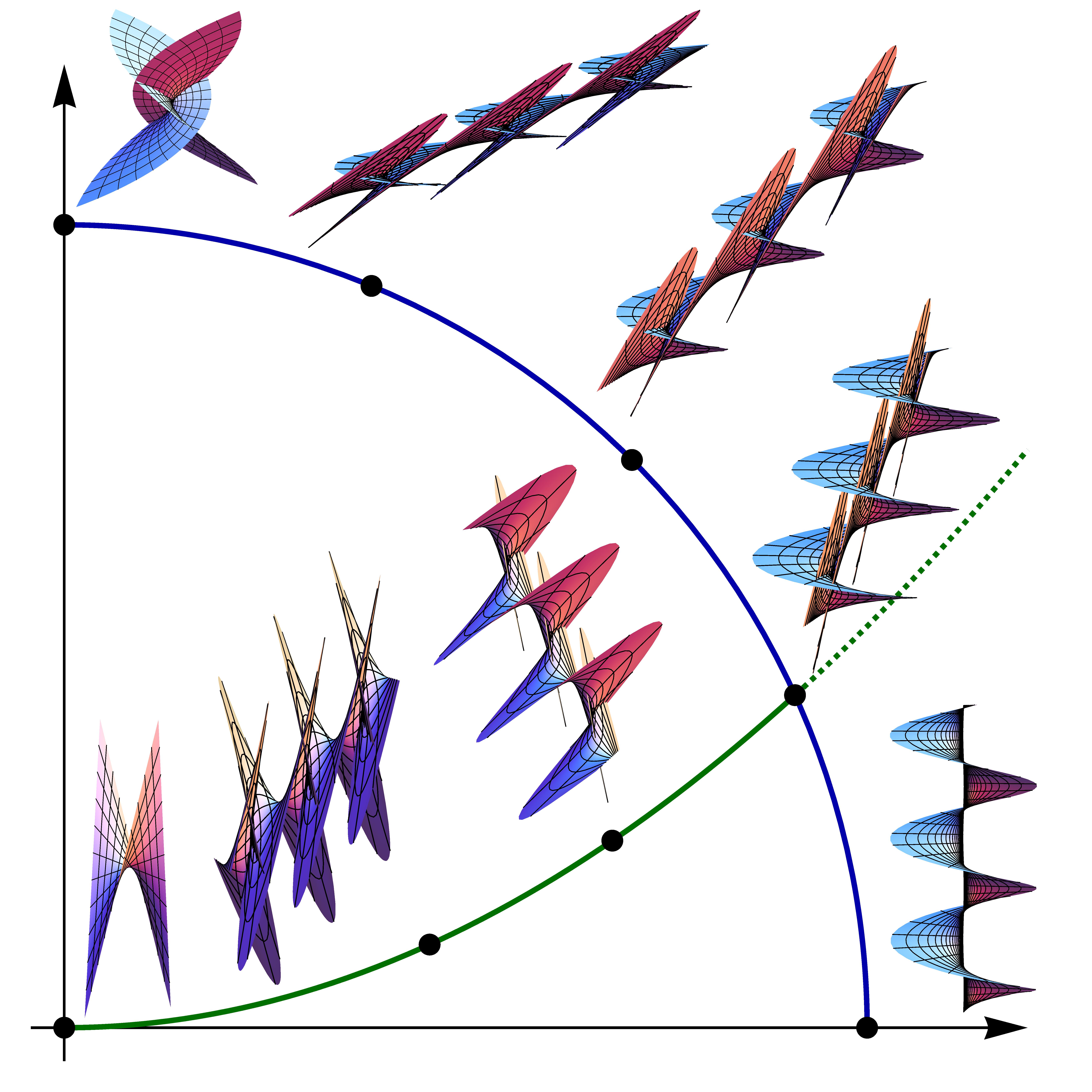}
    \caption{Continuous deformation of zero mean curvature surfaces keeping affine minimality}
    \label{fig:deformation2}
\end{figure}
Thus, every zero mean curvature surface with planar curvature lines (with normalized Hopf differerntial factor $Q = -\frac{1}{2}$) corresponds to a zero mean curvature surface that is also affine minimal (with normalized Hopf differential factor $Q = -\frac{i}{2}$), allowing us to conclude as follows from Remark~\ref{remark:conjugate}:
\begin{theorem}[{\cite[Satz~9]{strubecker_uber_1977}}]\label{thm:conjugate}
    Every zero mean curvature surface in isotropic $3$-space that is also an affine minimal surface is a conjugate zero mean curvature surface of a zero mean curvature surface with planar curvature lines.
\end{theorem}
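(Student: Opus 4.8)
The plan is to exploit the observation just made that the system \eqref{eqn:affpde} governing zero mean curvature surfaces that are also affine minimal coincides exactly with the system \eqref{eqn:pde} governing zero mean curvature surfaces with planar curvature lines, and then to read off the conjugacy from the behaviour of the Hopf differential under the two normalizations via Remark~\ref{remark:conjugate}.

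First I would take an arbitrary non-planar zero mean curvature surface $X : \Sigma \to \mathbb{I}^3$ that is also affine minimal and normalize its Hopf differential factor to $Q = -\frac{i}{2}$, as in the discussion preceding the theorem, so that $(u,v)$ are asymptotic coordinates and the conformal factor $\omega$ satisfies \eqref{eqn:affpde}. Since \eqref{eqn:affpde} is identical to \eqref{eqn:pde}, this same $\omega$ is realized by a zero mean curvature surface $Y : \Sigma \to \mathbb{I}^3$ with planar curvature lines whose Hopf differential factor is normalized to $Q = -\frac{1}{2}$; concretely, $Y$ is the surface produced by the Weierstrass data recovered from $\omega$ in Theorem~\ref{thm:wdata}.

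Next I would compare the two surfaces through their Weierstrass-theoretic invariants. By construction $X$ and $Y$ share the same conformal factor $\omega$, while their Hopf differential factors satisfy
\[
    Q_X = -\frac{i}{2} = i\left(-\frac{1}{2}\right) = i\, Q_Y,
\]
so the Hopf differentials differ precisely by the factor $i$. Since the conjugate surface associated to the Weierstrass data $(h, \eta)$ of $Y$ is $(h, i\eta)$, which leaves the induced metric $|\eta|^2$ unchanged and multiplies the Hopf differential $\frac{1}{2}\eta\,\dif{h}$ by $i$, the conjugate of $Y$ carries exactly the intrinsic data of $X$. Invoking Remark~\ref{remark:conjugate}, I conclude that $X$ agrees with the conjugate zero mean curvature surface of $Y$ up to isometries of isotropic $3$-space, which is the assertion.

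The step requiring the most care is the passage from a solution $\omega$ of the shared system to an actual surface $Y$ with the claimed properties, since one must ensure that the planar-curvature-line surface realizing $\omega$ exists and is genuinely umbilic-free where needed. This is not a real obstacle here, however, because Theorem~\ref{thm:analytic} already furnishes every such $\omega$ explicitly and Theorem~\ref{thm:wdata} exhibits the corresponding Weierstrass data, so the realization is automatic; the remaining verification is the purely formal matching of conformal factor and Hopf differential handled by Remark~\ref{remark:conjugate}.
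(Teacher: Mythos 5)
Your proposal is correct and follows essentially the same route as the paper: the paper likewise derives $\tilde{H} = -e^{-\omega}(\omega_{uv} + \omega_u\omega_v)$ under the normalization $Q = -\frac{i}{2}$, observes that \eqref{eqn:affpde} is identical to \eqref{eqn:pde}, and concludes via Remark~\ref{remark:conjugate} that the matching conformal factor together with Hopf differentials differing by the factor $i$ forces the two surfaces to be a conjugate pair up to isometry. Your added care about realizing $\omega$ by an actual planar-curvature-line surface (via Theorems~\ref{thm:analytic} and~\ref{thm:wdata}) is a slightly more explicit version of what the paper leaves implicit, but it is not a different argument.
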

Using the classification of zero mean curvature surfaces with planar curvature lines in Theorem~\ref{thm:wdata}, we also obtain a classficiation of all zero mean curvature surfaces in isotropic $3$-space that are also affine minimal (see also Figure~\ref{fig:examples2}):
	\begin{theorem}\label{thm:wdata2}
	    Let $X : \Sigma \to \mathbb{I}^3$ be a zero mean curvature immersion that is also affine minimal.
	    Then $X$ must be a piece of one, and only one, of
	        \begin{itemize}
	            \item plane $(0, 1 \dif{z})$,
	            \item trivial Enneper-type surface $(z, i\dif{z})$,
	            \item helicoid $(e^{z}, ie^{-z} \dif{z})$,
	            \item Enneper-type surface $(\frac{1}{z}, i z^2 \dif{z})$, or
	            \item one of Thomsen-type surfaces $(\frac{\alpha}{\beta} \coth \frac{\alpha z}{2}, i \frac{2\beta}{\alpha^2} \sinh^2 \frac{\alpha z}{2} \dif{z})$,
	        \end{itemize}
	    given with the respective Weierstrass data, up to isometries and homotheties of the isotropic space.
	\end{theorem}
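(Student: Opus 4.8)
The plan is to obtain this classification as an immediate consequence of Theorem~\ref{thm:conjugate} together with the explicit data recorded in Theorem~\ref{thm:wdata}. By Theorem~\ref{thm:conjugate}, any zero mean curvature immersion $X$ that is also affine minimal arises as the conjugate zero mean curvature surface of some zero mean curvature surface $X_0$ with planar curvature lines. Recalling that the conjugate operation sends the Weierstrass data $(h,\eta)$ to $(h, i\eta)$, and recalling from Remark~\ref{remark:conjugate} that it is precisely the passage from the normalization $Q = -\tfrac{1}{2}$ to $Q = -\tfrac{i}{2}$ that identifies the affine minimal family with the conjugates of the planar-curvature-line family, the entire argument reduces to applying the transformation $\eta \mapsto i\eta$ to each entry of the list in Theorem~\ref{thm:wdata}.

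Concretely, I would run through the five families of Theorem~\ref{thm:wdata} in turn. For the trivial Enneper-type surface $(z, 1\,\dif{z})$, the catenoid $(e^{z}, e^{-z}\,\dif{z})$, the Enneper-type surface $(\tfrac{1}{z}, z^2\,\dif{z})$, and the Bonnet-type surfaces $(\tfrac{\alpha}{\beta}\coth\tfrac{\alpha z}{2}, \tfrac{2\beta}{\alpha^2}\sinh^2\tfrac{\alpha z}{2}\,\dif{z})$, multiplying the holomorphic one-form by $i$ while keeping $h$ fixed produces exactly the data listed in the statement, with the catenoid passing to the helicoid and the Bonnet-type surfaces passing to the Thomsen-type surfaces. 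The one entry requiring separate comment is the plane $(0, 1\,\dif{z})$: its conjugate has data $(0, i\,\dif{z})$, and since $h \equiv 0$ the Weierstrass formula gives $X = (0, \Re\int i\,\dif{z}, \im\int i\,\dif{z})$, which is the standard plane rotated by a quarter turn and hence congruent to the plane $(0, 1\,\dif{z})$ under an isometry of isotropic space. Thus the plane is its own conjugate up to isometry and appears with unchanged data.

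Finally, the \emph{one and only one} assertion transfers directly from the corresponding uniqueness in Theorem~\ref{thm:wdata}. Conjugation descends to an involution on congruence classes of Weierstrass data: applying $\eta \mapsto i\eta$ twice yields $(h, -\eta)$, which gives the point-reflected surface $-X$, congruent to $X$ under the isometry $(\mathbf{l},\mathbf{x},\mathbf{y}) \mapsto (-\mathbf{l},-\mathbf{x},-\mathbf{y})$ of isotropic space. Consequently distinct congruence classes of planar-curvature-line surfaces correspond bijectively to distinct congruence classes of affine minimal surfaces, and the mutual non-congruence established in Theorem~\ref{thm:wdata} carries over. I do not anticipate any genuine obstacle in this argument; the only points demanding care are the degenerate plane case above and the bookkeeping of the normalizing conventions $Q = -\tfrac{1}{2}$ versus $Q = -\tfrac{i}{2}$ that, through Remark~\ref{remark:conjugate}, underlie the identification of the two families in the first place.
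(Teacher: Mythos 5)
Your proposal is correct and follows essentially the same route as the paper, which obtains Theorem~\ref{thm:wdata2} as an immediate consequence of Theorem~\ref{thm:conjugate} by applying the conjugation $(h,\eta)\mapsto(h,i\eta)$ to each entry of the list in Theorem~\ref{thm:wdata}. Your additional checks---that the plane is self-conjugate up to a quarter-turn isometry, and that mutual non-congruence transfers because conjugation is involutive up to the point reflection $-X$---are details the paper leaves implicit, and you handle them correctly.
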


Furthermore, we can also conclude from Theorem~\ref{thm:deformation} as follows, obtaining the isotropic space analogue of results in \cite{barthel_thomsensche_1980, cho_deformation_2018, akamine_analysis_2020, schaal_ennepersche_1973}  (see also Figure~\ref{fig:deformation2}):
\begin{theorem}\label{thm:deformation2}
    There exists a continuous deformation consisting exactly of the zero mean curvature surfaces that are also affine minimal in isotropic $3$-space.
\end{theorem}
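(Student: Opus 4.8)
The plan is to leverage the two results already in hand: Theorem~\ref{thm:conjugate}, which identifies the zero mean curvature surfaces that are also affine minimal as exactly the conjugate surfaces of the zero mean curvature surfaces with planar curvature lines, and Theorem~\ref{thm:deformation}, which produces a continuous deformation consisting exactly of the latter class. Since conjugation acts on Weierstrass data by the simple rule $(h,\eta) \mapsto (h, i\eta)$ (changing the normalized Hopf differential factor from $Q = -\tfrac{1}{2}$ to $Q = -\tfrac{i}{2}$), the strategy is to apply this operation to every member of the deformation family and argue that both continuity and exhaustiveness are preserved.

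Concretely, I would take the family $(\tilde{h}_{(r,\theta)}, \tilde{\eta}_{(r,\theta)} \dif{z})$ constructed in the proof of Theorem~\ref{thm:deformation} and form the conjugate family $(\tilde{h}_{(r,\theta)}, i\,\tilde{\eta}_{(r,\theta)} \dif{z})$. Because multiplication of the holomorphic one-form by the fixed constant $i$ leaves $\tilde{h}_{(r,\theta)}$ untouched and commutes with taking limits, the limit computations carry over verbatim: as $\theta \searrow 0$ one recovers the helicoid $(e^{rz}, -\tfrac{i}{r}e^{-rz}\dif{z})$, and as $\theta \nearrow \tfrac{\pi}{2}$ one recovers the Enneper-type surface $(\tfrac{2}{rz-1}, i\,\tfrac{(rz-1)^2}{2r}\dif{z})$, both of which agree, up to homothety and reparametrization, with the Weierstrass data catalogued in Theorem~\ref{thm:wdata2}. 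Thus the conjugate family continuously connects the helicoid, the Thomsen-type surfaces, and the affine-minimal Enneper-type surface.

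To establish that this conjugate family is a continuous deformation in the sense used in the paper, namely uniform component-wise convergence on compact subdomains, I would invoke the Weierstrass representation $X = \Re \int (h, 1, -i)\eta$. Since the conjugate surface is obtained by integrating $(h, 1, -i)(i\eta)$, the integrand depends continuously on the parameter exactly as in the planar-curvature-line case, the only change being the insertion of the constant factor $i$; hence the conjugate surfaces converge in the required sense whenever the original family does. Exhaustiveness then follows because Theorem~\ref{thm:conjugate} together with the classification in Theorem~\ref{thm:wdata2} furnishes a bijective correspondence between the two classes via $\eta \mapsto i\eta$, so the conjugate family sweeps out precisely the zero mean curvature surfaces that are also affine minimal.

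I expect the only substantive point to be confirming that the notion of continuous convergence survives conjugation, but this is essentially automatic: conjugation multiplies the integrand by a constant of modulus one, and therefore preserves every uniform estimate on compact sets. The remainder reduces to transcribing the limit data of Theorem~\ref{thm:deformation} through the map $\eta \mapsto i\eta$ and matching against Theorem~\ref{thm:wdata2}.
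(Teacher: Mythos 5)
Your proposal is correct and takes essentially the same route as the paper: the authors likewise deduce Theorem~\ref{thm:deformation2} immediately from Theorem~\ref{thm:deformation} by applying the conjugation map $(h,\eta) \mapsto (h, i\eta)$ furnished by Theorem~\ref{thm:conjugate}, leaving the routine checks implicit. Your write-up merely makes explicit what the paper omits—that multiplying the integrand by the constant unit factor $i$ preserves uniform convergence on compact sets and that the limiting data match the classification in Theorem~\ref{thm:wdata2}—which is a faithful elaboration rather than a different argument.
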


\begin{bibdiv}
\begin{biblist}

\bib{abresch_constant_1987}{article}{
      author={Abresch, Uwe},
       title={Constant mean curvature tori in terms of elliptic functions},
        date={1987},
     journal={J. Reine Angew. Math.},
      volume={374},
       pages={169\ndash 192},
      review={\MR{876223}},
	doi={10.1515/crll.1987.374.169}
}

\bib{akamine_analysis_2020}{article}{
      author={Akamine, Shintaro},
      author={Cho, Joseph},
      author={Ogata, Yuta},
       title={Analysis of timelike {{Thomsen}} surfaces},
        date={2020},
     journal={J. Geom. Anal.},
      volume={30},
      number={1},
       pages={731\ndash 761},
      review={\MR{4058536}},
	doi={10.1007/s12220-019-00166-7}
}

\bib{barthel_thomsensche_1980}{article}{
      author={Barthel, Woldemar},
      author={Volkmer, Reinhard},
      author={Haubitz, Imme},
       title={{Thomsensche Minimalfl{\"a}chen--analytisch und anschaulich}},
        date={1980},
     journal={Resultate Math.},
      volume={3},
      number={2},
       pages={129\ndash 154},
      review={\MR{600998}},
	doi={10.1007/BF03323354}
}

\bib{beck_zur_1913}{article}{
      author={Beck, H.},
       title={Zur {{Geometrie}} in der {{Minimalebene}}},
        date={1913},
     journal={S.-B. Berlin. Math. Ges.},
      volume={12},
       pages={14\ndash 30},
}

\bib{berwald_uber_1915}{article}{
      author={Berwald, Ludwig},
       title={{\"U}ber {{Bewegungsinvarianten}} und elementare {{Geometrie}} in
  einer {{Minimalebene}}},
        date={1915},
     journal={Monatsh. Math. Phys.},
      volume={26},
      number={1},
       pages={211\ndash 228},
      review={\MR{1548649}},
	doi={10.1007/BF01999449}
}

\bib{bobenko_painleve_2000}{book}{
      author={Bobenko, Alexander~I.},
      author={Eitner, Ulrich},
       title={Painlev{\'e} equations in the differential geometry of surfaces},
      series={Lecture {{Notes}} in {{Math}}.},
   publisher={Springer-Verlag},
     address={Berlin},
        date={2000},
      volume={1753},
      review={\MR{1806600}},
	doi={10.1007/b76883}
}

\bib{bockwoldt_ueber_1878}{thesis}{
      author={Bockwoldt, Georg},
       title={Ueber die {{Enneper}}'schen {{Fl{\"a}chen}} mit constantem
  positivem {{Kr{\"u}mmungsmaas}}, bei denen die eine {{Schaar}} der
  {{Kr{\"u}mmungslinien}} von ebenen {{Curven}} gebildet wird},
        type={Ph.D. Thesis},
        date={1878},
}

\bib{bonnet_memoire_1848}{article}{
      author={Bonnet, O.},
       title={M{\'e}moire sur la th{\'e}orie g{\'e}n{\'e}rale des surfaces},
        date={1848},
     journal={J. {\'E}c. Polytech.},
      volume={32},
       pages={1\ndash 146},
}

\bib{bonnet_observations_1855}{article}{
      author={Bonnet, O.},
       title={{Observations sur les surfaces minima}},
        date={1855},
     journal={C. R. Acad. Sci. Paris},
      volume={41},
       pages={1057\ndash 1058},
}

\bib{bonnet_memoire_1867}{article}{
      author={Bonnet, O.},
       title={M{\'e}moire sur la th{\'e}orie des surfaces applicables sur une
  surface donn{\'e}e},
        date={1867},
     journal={J. {\'E}c. Polytech.},
      volume={42},
       pages={1\ndash 151},
}

\bib{cho_spinor_2024}{article}{
      author={Cho, Joseph},
      author={Lee, Dami},
      author={Lee, Wonjoo},
      author={Yang, Seong-Deog},
       title={Spinor representation in isotropic 3-space via {{Laguerre}}
  geometry},
        date={2024},
     journal={Results Math.},
      volume={79},
      number={1},
       pages={8:1\ndash 33},
      eprint={2303.13677},
      review={\MR{4660756}},
	doi={10.1007/s00025-023-02031-0}
}

\bib{cho_deformation_2017}{article}{
      author={Cho, Joseph},
      author={Ogata, Yuta},
       title={Deformation of minimal surfaces with planar curvature lines},
        date={2017},
     journal={J. Geom.},
      volume={108},
      number={2},
       pages={463\ndash 479},
      eprint={1606.01625},
      review={\MR{3667234}},
	doi={10.1007/s00022-016-0352-0}
}

\bib{cho_deformation_2018}{article}{
      author={Cho, Joseph},
      author={Ogata, Yuta},
       title={Deformation and singularities of maximal surfaces with planar
  curvature lines},
        date={2018},
     journal={Beitr. Algebra Geom.},
      volume={59},
      number={3},
       pages={465\ndash 489},
      eprint={1803.01306},
      review={\MR{3844639}},
	doi={10.1007/s13366-018-0399-1}
}

\bib{da_silva_holomorphic_2021}{article}{
      author={{da Silva}, Luiz C.~B.},
       title={Holomorphic representation of minimal surfaces in simply
  isotropic space},
        date={2021},
     journal={J. Geom.},
      volume={112},
      number={3},
       pages={35:1\ndash 21},
      review={\MR{4318435}},
	doi={10.1007/s00022-021-00598-z}
}

\bib{enneper_analytisch-geometrische_1864}{article}{
      author={Enneper, Alfred},
       title={Analytisch-geometrische {{Untersuchungen}}},
        date={1864},
     journal={Z. Math. Phys.},
      volume={9},
       pages={96\ndash 125},
}

\bib{enneper_analytisch-geometrische_1868}{article}{
      author={Enneper, Alfred},
       title={Analytisch-geometrische {{Untersuchungen}}},
        date={1868},
     journal={Nachr. K{\"o}nigl. Ges. Wiss. Georg-Augusts-Univ. G{\"o}ttingen},
      volume={1868},
      number={11},
       pages={258\ndash 276, 421\ndash 433},
}

\bib{enneper_untersuchungen_1878}{article}{
      author={Enneper, Alfred},
       title={{Untersuchungen {\"u}ber die Fl{\"a}chen mit planen und
  sph{\"a}rischen Kr{\"u}mmungslinien}},
        date={1878},
     journal={Abh. K{\"o}nigl. Ges. Wissensch. G{\"o}ttingen},
      volume={23},
       pages={1\ndash 96},
}

\bib{jiang_planar_2022}{article}{
      author={Jiang, Caigui},
      author={Wang, Cheng},
      author={Tellier, Xavier},
      author={Wallner, Johannes},
      author={Pottmann, Helmut},
       title={Planar panels and planar supporting beams in architectural
  structures},
        date={2022},
     journal={ACM Trans. Graph.},
      volume={42},
      number={2},
       pages={19:1\ndash 17},
	doi={10.1145/3561050}
}

\bib{joachimsthal_demonstrationes_1846}{article}{
      author={Joachimsthal, F.},
       title={Demonstrationes theorematum ad superficies curvas spectantium},
        date={1846},
     journal={J. Reine Angew. Math.},
      volume={30},
       pages={347\ndash 350},
      review={\MR{1578476}},
	doi={10.1515/crll.1846.30.347}
}

\bib{kilian_material-minimizing_2017}{article}{
      author={Kilian, Martin},
      author={Pellis, Davide},
      author={Wallner, Johannes},
      author={Pottmann, Helmut},
       title={Material-minimizing forms and structures},
        date={2017},
     journal={ACM Trans. Graph.},
      volume={36},
      number={6},
       pages={173:1\ndash 12},
	doi={10.1145/3130800.3130827}
}

\bib{leite_surfaces_2015}{article}{
      author={Leite, Maria~Luiza},
       title={Surfaces with planar lines of curvature and orthogonal systems of
  cycles},
        date={2015},
     journal={J. Math. Anal. Appl.},
      volume={421},
      number={2},
       pages={1254\ndash 1273},
      review={\MR{3258318}},
	doi={10.1016/j.jmaa.2014.07.047}
}

\bib{lenz_ueber_1879}{thesis}{
      author={Lenz, Ernst},
       title={Ueber die {{Enneper}}'schen {{Fl{\"a}chen}} constanten negativen
  {{Kr{\"u}mmungsmaasses}} mit einem {{Systeme}} ebener
  {{Kr{\"u}mmungslinien}}},
        type={Ph.D. Thesis},
        date={1879},
}

\bib{magid_timelike_1991-1}{article}{
      author={Magid, Martin~A.},
       title={Timelike {{Thomsen}} surfaces},
        date={1991},
     journal={Results Math.},
      volume={20},
      number={3-4},
       pages={691\ndash 697},
      review={\MR{1145303}},
	doi={10.1007/BF03323205}
}

\bib{manhart_bonnet-thomsen_2015}{article}{
      author={Manhart, Friedrich},
       title={Bonnet-{{Thomsen}} surfaces in {{Minkowski}} geometry},
        date={2015},
     journal={J. Geom.},
      volume={106},
      number={1},
       pages={47\ndash 61},
      review={\MR{3320877}},
	doi={10.1007/s00022-014-0231-5}
}

\bib{millar_designing_2023}{article}{
      author={Millar, Cameron},
      author={Mitchell, Toby},
      author={Mazurek, Arek},
      author={Chhabra, Ashpica},
      author={Beghini, Alessandro},
      author={Clelland, Jeanne~N},
      author={McRobie, Allan},
      author={Baker, William~F},
       title={On designing plane-faced funicular gridshells},
        date={2023},
     journal={Int. J. Space Struct.},
      volume={38},
      number={1},
       pages={40\ndash 63},
	doi={10.1177/09560599221126656}
}

\bib{monge_application_1850}{book}{
      author={Monge, Gaspard},
       title={Application de l'analyse {\`a} la g{\'e}om{\'e}trie},
     edition={Cinqui{\`e}me {\'e}dition},
   publisher={Bachelier},
     address={Paris},
        date={1850},
}

\bib{nitsche_vorlesungen_1975}{book}{
      author={Nitsche, Johannes C.~C.},
       title={Vorlesungen {\"u}ber {{Minimalfl{\"a}chen}}},
   publisher={Springer-Verlag},
     address={Berlin},
        date={1975},
      review={\MR{0448224}},
}

\bib{pember_weierstrass-type_2020}{article}{
      author={Pember, Mason},
       title={Weierstrass-type representations},
        date={2020},
     journal={Geom. Dedicata},
      volume={204},
      number={1},
       pages={299\ndash 309},
      review={\MR{4056704}},
	doi={10.1007/s10711-019-00456-y}
}

\bib{pottmann_laguerre_2009}{article}{
      author={Pottmann, Helmut},
      author={Grohs, Philipp},
      author={Mitra, Niloy~J.},
       title={Laguerre minimal surfaces, isotropic geometry and linear
  elasticity},
        date={2009},
     journal={Adv. Comput. Math.},
      volume={31},
      number={4},
       pages={391\ndash 419},
      review={\MR{2558260}},
	doi={10.1007/s10444-008-9076-5}
}

\bib{pottmann_discrete_2007}{incollection}{
      author={Pottmann, Helmut},
      author={Liu, Yang},
       title={Discrete surfaces in isotropic geometry},
   book={
	title={Mathematics of {{Surfaces XII}}},
	editor={Martin, Ralph},
	editor={Sabin, Malcolm},
	editor={Winkler, Joab},
      	series={Lecture {{Notes}} in {{Computer Science}}},
   	publisher={Springer},
     	address={Berlin, Heidelberg},
        date={2007},},
       pages={341\ndash 363},
	doi={10.1007/978-3-540-73843-5_21}
}

\bib{sachs_isotrope_1990}{book}{
      author={Sachs, Hans},
       title={Isotrope {{Geometrie}} des {{Raumes}}},
   publisher={Friedr. Vieweg \& Sohn},
     address={Braunschweig},
        date={1990},
      review={\MR{1059891}},
	doi={10.1007/978-3-322-83785-1}
}

\bib{schaal_ennepersche_1973}{article}{
      author={Schaal, Hermann},
       title={{Die Ennepersche Minimalfl{\"a}che als Grenzfall der
  Minimalfl{\"a}che von G. Thomsen}},
        date={1973},
     journal={Arch. Math. (Basel)},
      volume={24},
       pages={320\ndash 322},
      review={\MR{0319068}},
	doi={10.1007/BF01228217}
}

\bib{seo_zero_2021}{article}{
      author={Seo, Jin~Ju},
      author={Yang, Seong-Deog},
       title={Zero mean curvature surfaces in isotropic three-space},
        date={2021},
     journal={Bull. Korean Math. Soc.},
      volume={58},
      number={1},
       pages={1\ndash 20},
      review={\MR{4206079}},
	doi={10.4134/BKMS.b190783}
}

\bib{shaw_recovering_2004}{article}{
      author={Shaw, William~T.},
       title={Recovering holomorphic functions from their real or imaginary
  parts without the {{Cauchy-Riemann}} equations},
        date={2004},
     journal={SIAM Rev.},
      volume={46},
      number={4},
       pages={717\ndash 728},
      review={\MR{2124683}},
	doi={10.1137/S0036144503432151}
}

\bib{strubecker_differentialgeometrie_1941}{article}{
      author={Strubecker, Karl},
       title={Differentialgeometrie des isotropen {{Raumes}}. {{I}}.
  {{Theorie}} der {{Raumkurven}}},
        date={1941},
     journal={Akad. Wiss. Wien, S.-B. IIa},
      volume={150},
       pages={1\ndash 53},
      review={\MR{18957}},
}

\bib{strubecker_differentialgeometrie_1942}{article}{
      author={Strubecker, Karl},
       title={Differentialgeometrie des isotropen {{Raumes}}. {{II}}. {{Die Flächen}}
  konstanter {{Relativkrümmung}} ${{K}}=rt-s^2$},
        date={1942},
     journal={Math. Z.},
      volume={47},
       pages={743–777},
      review={\MR{16238}},
	doi={10.1007/BF01180984}
}

\bib{strubecker_differentialgeometrie_1942-1}{article}{
      author={Strubecker, Karl},
       title={Differentialgeometrie des isotropen {{Raumes}}. {{III}}.
  {{Fl{\"a}chentheorie}}},
        date={1942},
     journal={Math. Z.},
      volume={48},
       pages={369\ndash 427},
      review={\MR{9145}},
	doi={10.1007/BF01180022}
}

\bib{strubecker_differentialgeometrie_1944}{article}{
      author={Strubecker, Karl},
       title={Differentialgeometrie des isotropen {{Raumes}}. {{IV}}.
  {{Theorie}} der fl{\"a}chentreuen {{Abbildungen}} der {{Ebene}}},
        date={1944},
     journal={Math. Z.},
      volume={50},
       pages={1\ndash 92},
      review={\MR{16990}},
	doi={10.1007/BF01312437}
}

\bib{strubecker_differentialgeometrie_1949}{article}{
      author={Strubecker, Karl},
       title={Differentialgeometrie des isotropen {{Raumes}}. {{V}}. {{Zur
  Theorie}} der {{Eilinien}}},
        date={1949},
     journal={Math. Z.},
      volume={51},
       pages={525\ndash 573},
      review={\MR{29201}},
	doi={10.1007/BF01181547}
}

\bib{strubecker_uber_1975}{article}{
      author={Strubecker, Karl},
       title={{\"U}ber das isotrope {{Gegenst{\"u}ck}}  $z=\frac
  {3}{2}\cdot {{\Im}}(x+iy)^{2/3}$ der
  {{Minimalfl{\"a}che}} von {{Enneper}}},
        date={1975},
     journal={Abh. Math. Sem. Univ. Hamburg},
      volume={44},
       pages={152\ndash 174},
      review={\MR{397580}},
	doi={10.1007/BF02992955}
}

\bib{strubecker_uber_1977}{article}{
      author={Strubecker, Karl},
       title={{\"U}ber die {{Minimalfl{\"a}chen}} des isotropen {{Raumes}},
  welche zugleich {{Affinminimalfl{\"a}chen}} sind},
        date={1977},
     journal={Monatsh. Math.},
      volume={84},
      number={4},
       pages={303\ndash 339},
      review={\MR{467561}},
      doi={10.1007/BF01366499}
}

\bib{tellier_designing_2023}{article}{
      author={Tellier, Xavier},
      author={Douthe, Cyril},
      author={Baverel, Olivier},
      author={Hauswirth, Laurent},
       title={Designing funicular grids with planar quads using isotropic
  {{Linear-Weingarten}} surfaces},
        date={2023},
     journal={Int. J. Solids Struct.},
      volume={264},
       pages={112028:1\ndash 16},
	doi={10.1016/j.ijsolstr.2022.112028}
}

\bib{thomsen_uber_1923}{article}{
      author={Thomsen, Gerhard},
       title={{Uber affine Geometrie XXXIX}},
        date={1923},
     journal={Abh. Math. Sem. Univ. Hamburg},
      volume={2},
      number={1},
       pages={71\ndash 73},
      review={\MR{3069411}},
	doi={10.1007/BF02951850}
}

\bib{voretzsch_untersuchung_1883}{thesis}{
      author={Voretzsch, Max},
       title={Untersuchung einer speciellen {{Fl{\"a}che}} constanter mittlerer
  {{Kr{\"u}mmung}}, bei welcher die eine der beiden {{Schaaren}} der
  {{Kr{\"u}mmungslinien}} von ebenen {{Curven}} gebildet wird},
        type={Ph.D. Thesis},
        date={1883},
}

\bib{walter_explicit_1987}{article}{
      author={Walter, Rolf},
       title={Explicit examples to the {{H-problem}} of {{Heinz Hopf}}},
        date={1987},
     journal={Geom. Dedicata},
      volume={23},
      number={2},
       pages={187\ndash 213},
      review={\MR{892400}},
	doi={10.1007/BF00181275}
}

\bib{wente_counterexample_1986}{article}{
      author={Wente, Henry~C.},
       title={Counterexample to a conjecture of {{H}}. {{Hopf}}},
        date={1986},
     journal={Pacific J. Math.},
      volume={121},
      number={1},
       pages={193\ndash 243},
      review={\MR{815044}},
	doi={10.2140/pjm.1986.121.193}
}

\end{biblist}
\end{bibdiv}

\end{document}